\definecolor{keywordcolor}{rgb}{0.7, 0.1, 0.1}   
\definecolor{tacticcolor}{rgb}{0.1, 0.2, 0.6}    
\definecolor{commentcolor}{rgb}{0.4, 0.4, 0.4}   
\definecolor{symbolcolor}{rgb}{0.0, 0.1, 0.6}    
\definecolor{sortcolor}{rgb}{0.1, 0.5, 0.1}      
\definecolor{attributecolor}{rgb}{0.7, 0.1, 0.1} 
\DeclareRobustCommand\widecheck[1]{{\mathpalette\@widecheck{#1}}}
\def\@widecheck#1#2{%
    \setbox\z@\hbox{\m@th$#1#2$}%
    \setbox\tw@\hbox{\m@th$#1%
       \widehat{%
          \vrule\@width\z@\@height\ht\z@
          \vrule\@height\z@\@width\wd\z@}$}%
    \dp\tw@-\ht\z@
    \@tempdima\ht\z@ \advance\@tempdima2\ht\tw@ \divide\@tempdima\thr@@
    \setbox\tw@\hbox{%
       \raise\@tempdima\hbox{\scalebox{1}[-1]{\lower\@tempdima\box
\tw@}}}%
    {\ooalign{\box\tw@ \cr \box\z@}}}
\newcommand{\B}{\mathbb{B}}
\newcommand{\lil}{\lstinline}
\newcommand{\N}{\mathbb{N}}
\newcommand{\ZFC}{\mathsf{ZFC}}
\newcommand{\CH}{\mathsf{CH}}
\newtheorem{thm}{Theorem}[section]
\theoremstyle{definition}
\newtheorem{defn}{Definition}[section]
\newtheorem{remark}{Remark}[section]
\DeclareMathOperator{\cf}{cf}
\DeclareMathOperator{\Ord}{Ord}
\begin{document}

\title{A Formal Proof of the Independence of the Continuum Hypothesis}


\author{Jesse Michael Han}
\affiliation{
  \department{Department of Mathematics}              
  \institution{University of Pittsburgh}            
  \streetaddress{4200 Fifth Ave}
  \city{Pittsburgh}
  \state{PA}
  \postcode{15260}
  \country{USA}                    
}
\email{jessemichaelhan@gmail.com}          

\author{Floris van Doorn}
\orcid{0000-0003-2899-8565}             
\affiliation{
  \department{Department of Mathematics}              
  \institution{University of Pittsburgh}            
  \streetaddress{4200 Fifth Ave}
  \city{Pittsburgh}
  \state{PA}
  \postcode{15260}
  \country{USA}                    
}
\email{fpvdoorn@gmail.com}         

\begin{abstract}
  We describe a formal proof of the independence of the continuum hypothesis (\(\mathsf{CH}\)) in the Lean theorem prover. We use Boolean-valued models to give forcing arguments for both directions, using Cohen forcing for the consistency of \(\neg \mathsf{CH}\) and a \(\sigma\)-closed forcing for the consistency of \(\mathsf{CH}\).
\end{abstract}





\begin{CCSXML}
<ccs2012>
<concept>
<concept_id>10003752.10003790.10002990</concept_id>
<concept_desc>Theory of computation~Logic and verification</concept_desc>
<concept_significance>500</concept_significance>
</concept>
<concept>
<concept_id>10003752.10003790.10011740</concept_id>
<concept_desc>Theory of computation~Type theory</concept_desc>
<concept_significance>500</concept_significance>
</concept>
</ccs2012>
\end{CCSXML}

\ccsdesc[500]{Theory of computation~Logic and verification}
\ccsdesc[500]{Theory of computation~Type theory}


\keywords{Interactive theorem proving, formal verification, continuum hypothesis, forcing, Lean, set theory, ZFC, Boolean-valued models} 

\maketitle

\section{Introduction}
\label{sect:intro}
The continuum hypothesis ($\mathsf{CH}$) states that there is no cardinality between $\omega$, the smallest infinite cardinal and $\mathfrak{c}$, the cardinality of the continuum.
It was posed by Cantor \cite{cantor1878beitrag} in 1878 and was the first problem on Hilbert's list of twenty-three unsolved problems in mathematics.
G\"odel \cite{godel1938consistency} proved in 1938 that $\mathsf{CH}$ was consistent with Zermelo-Fraenkel set theory with the axiom of choice ($\ZFC$). He conjectured that \(\mathsf{CH}\) was independent, i.e. neither provable nor disprovable, from \(\ZFC\). This remained an open problem until 1963, when Paul Cohen developed \emph{forcing} \cite{cohen-the-independence-of-the-continuum-hypothesis-1,cohen1964independence2} and used it to prove the consistency of $\neg \mathsf{CH}$ with \(\ZFC\), completing the independence proof. This work started modern set theory, and for his invention of forcing, Cohen was awarded a Fields medal.

The independence of \(\CH\) has also been an open formalization problem. Since 2005, Freek Wiedijk has maintained a list (\emph{Formalizing 100 theorems}~\cite{wiedijk100theorems}) of one hundred problems for formalized mathematics, with the independence of \(\CH\) as the 24th. As of 2019, it was one of the six remaining problems.

In this paper we describe the successful completion of the Flypitch project\footnote{\url{https://flypitch.github.io}} (\textbf{F}ormal\textbf{ly} \textbf{p}roving the \textbf{i}ndependence of \textbf{t}he \textbf{c}ontinuum \textbf{h}ypothesis).
We formalize forcing with Boolean-valued models. We use Cohen forcing to construct a Boolean-valued model of \(\ZFC\) where \(\CH\) is false, and a \(\sigma\)-closed forcing to construct a Boolean-valued model of \(\ZFC\) where \(\CH\) is true. We then combine this with a deep embedding of first-order logic, including a proof system and the axioms of \(\ZFC\), to verify that \(\CH\) is neither provable nor disprovable from \(\ZFC\).

Our formalization\footnote{\url{https://github.com/flypitch/flypitch}} uses the Lean 3 theorem prover, building on top of \textsf{mathlib}~\cite{mathlib}.
Lean is an interactive proof assistant under active development at Microsoft Research~\cite{de2015lean, ullrich2019counting}. It has a similar metatheory to Coq, adding definitional proof irrelevance, quotient types, and a noncomputable choice principle.
Our formalization makes as much use of the expressiveness of Lean's dependent type theory as possible, using constructions which are impossible or unwieldy to encode in HOL, let alone ZF.
The types of cardinals and ordinals in \textsf{mathlib}, which are defined as proper equivalence classes of (well-ordered) types, live one universe level higher than the types used to construct them, and our models of set theory require as input an entire universe of types. Our encoding of first-order logic also uses parameterized inductive types which ensure that type-correctness implies well-formedness, eliminating the need for separate well-formedness proofs.


The method of forcing with Boolean-valued models was developed by Solovay and Scott \cite{scott1967proof,scott-solovay} as a simplification of Cohen's method.
Some of these simplifications were incorporated by Shoenfield \cite{shoenfield1971unramified} into a general theory of forcing using partial orders, and it is in this form that forcing is usually practiced.
While both approaches have essentially the same mathematical content (see e.g.\ \cite{kunen2014set, jech2013set, moore2019method}), there are several reasons why we chose to use Boolean-valued models.
The main reason is the directness of forcing with Boolean-valued models, which bypasses the need for the L\"owenheim-Skolem theorems, Mostowski collapse, countable transitive models, or genericity considerations for filters.
The theory of forcing with Boolean-valued models also cleanly splits into several parts, allowing us to formalize different components in parallel (e.g. a general theory of Boolean-valued semantics, a library for calculations in complete Boolean algebras, a construction of Boolean-valued models of set-theory) and later recombine them. In particular, our library for Boolean-valued semantics for first-order logic is completely general and can be reused for other formalization projects. Finally, our Boolean-valued models of set theory are inductive types generalizing the Aczel encoding of set theory into dependent type theory; consequently, the automatically-generated induction principle \emph{is} \(\in\)-induction, leading to cleaner proofs.

\subsection{Proof Outline}
\label{subsect:intro:outline}

The usual method to show that a statement is unprovable is to construct a model where the statement is false, and apply the soundness theorem; our method is similar, except that we use Boolean-valued semantics and a Boolean-valued soundness theorem (see \Cref{sect:boolean-semantics}).
The difference between Boolean-valued models and ordinary models is that the truth values in a Boolean-valued model \lil{M} live in a complete Boolean algebra \lil{(𝔹, ⊓, ⊔, ⨅, ⨆,⊥,⊤)}.
If we can construct two Boolean-valued models of $\ZFC$, one where $\CH$ is true \(\top\), and one where $\CH$ is false \(\bot\), then by the Boolean-valued soundness theorem, $\CH$ is independent from $\ZFC$.

For any complete Boolean algebra $\mathbb{B}$ we implement the set-theoretic universe \(V^{\mathbb{B}}\) of \(\mathbb{B}\)-valued sets by generalizing the Aczel encoding of set theory (called \lil{pSet}, see \Cref{sect:bset}), obtaining a type \lil{bSet 𝔹} of $\mathbb{B}$-valued sets. The fundamental theorem of forcing for Boolean-valued models \cite{hamkins2012well}, translated to our situation, then states that \lil{bSet 𝔹} is a \lil{𝔹}-valued model is \(\ZFC\).

To show the independence of \(\CH\), it remains to construct two appropriate complete Boolean algebras 
The properties of \lil{bSet 𝔹} can vary wildly depending on the choice of the complete Boolean algebra \lil{𝔹}. There is always a map \lil{check : pSet → bSet 𝔹}, $x \mapsto \widecheck{x}$, but in general, $\widecheck{x}$ might have different properties than $x$. Making a good choice of \(\B\) and controlling the behavior of the check-names is precisely the task of forcing (\Cref{sect:forcing}).

Traditional presentations of forcing, even with Boolean-valued models (e.g. \cite{bell2011set}, \cite{jech2013set}), are careful to stay within the foundations of $\ZFC$, emphasizing that all arguments may be performed internal to a model of $\ZFC$, etc. In order to formalize these set-theoretic arguments in a type-theoretic metatheory, it is important to separate their mathematical content from their metamathematical content. It is not immediately clear what parts of these arguments use their set-theoretic foundation in an essential way and require modification in the passage to type theory. Our formalization clarifies some of these questions.

We use custom domain-specific tactics and various forms of automation throughout our formalization, notably a tactic library for simulating natural deduction proofs inside a complete Boolean algebra (\Cref{sect:metaprogramming}).
This reveals another advantage of working in a proof assistant: the bookkeeping of Boolean truth-values, sometimes regarded as a tedious aspect of the Boolean-valued approach to forcing, can be automated away.

\paragraph{Contributions}
An earlier paper \cite{DBLP:conf/itp/HanD19} describes a formalization of Cohen forcing and the unprovability of \(\CH\). In order to keep our presentation self-contained, 
we reproduce some of that material here, incorporating it into our discussions of our deep embedding of first-order logic/Boolean-valued semantics, usage of metaprogramming, and the Cohen forcing argument.
Our main novel contribution is a formalization of collapse forcing and the unprovability of \(\neg \CH\), thereby providing the first formalization of the independence of \(\CH\) in a single theorem prover. For reasons we will see in \Cref{sect:forcing}, the forcing argument for \(\CH\) requires far more set theory and is harder to formalize than the forcing argument for \(\neg \CH\). Moreover, we elaborate on parts of the formalization which were omitted from \cite{DBLP:conf/itp/HanD19}, including expanded discussions of our implementation of the \(\ZFC\) axioms and our formalization of the \(\Delta\)-system lemma.

\paragraph{Sources}
Our strategy for forcing \(\neg \CH\) is a synthesis of the proofs in the textbooks of Bell (\cite{bell2011set}, Chapter 2) and Manin (\cite{manin2009course}, Chapter 8).
For the $\Delta$-system lemma, which we use to verify that Cohen forcing is CCC, we follow Kunen (\cite{kunen2014set}, Chapters 1 and 5).

We were unable to find a reference for a purely Boolean-valued account of forcing \(\CH\). We loosely followed the conventional arguments given by Weaver (\cite{weaver2014forcing}, Chapter 12) and Moore (\cite{moore2019method}), and base our construction of \(\mathbb{B}_{\mathsf{collapse}}\) on the collapse algebras defined by Bell (\cite{bell2011set}, Exercise 2.18).

\paragraph{Related Work} Set theory and first-order logic are both common targets for formalization. Shankar \cite{shankar1997metamathematics} used a deep embedding of first-order logic for incompleteness theorems. Harrison gives a deeply-embedded implementation of first-order logic in HOL Light \cite{harrison1998formalizing} and a proof-search style account of the completeness theorem in \cite{harrison2009handbook}. Other formalizations of first-order logic can be found in Isabelle/HOL (\cite{Ridge2005AMV}, \cite{schlichtkrull2018formalization},\cite{FOL-Fitting-AFP}) and Coq (\cite{ilik2010constructive}, \cite{DBLP:conf/tphol/OConnor05}).

A large body of formalized set theory has been completed in Isabelle/ZF, led by Paulson and his collaborators \cite{paulson1996mechanizing, paulson1993set, paulson2002reflection}, including the relative consistency of \(\mathsf{AC}\) with $\mathsf{ZF}$ \cite{paulson2008relative}. Building on this, Gunther, Pagano, and Terraf have taken some first steps towards formalizing forcing \cite{gunther2018first, gunther2019mechanization}, by way of generic extensions of countable transitive models.



\section{First-Order Logic}
\label{sect:fol}

The starting point for first-order logic is a \emph{language} of relation and function symbols.
We represent a language as a pair of $\N$-indexed families of types, each of which is to be thought of as the collection of relation (resp. function) symbols stratified by arity:
\begin{lstlisting}
structure Language : Type (u+1) :=
(functions : ℕ → Type u)
(relations : ℕ → Type u)
\end{lstlisting}
\subsection{Terms, Formulas and Proofs}
\label{subsect:fol:terms}
The main novelty of our implementation of first-order logic is the use of \emph{partially applied} terms and formulas, encoded in a parameterized inductive type where the $\N$ parameter measures the difference between the arity and the number of applications.
The benefit of this is that it is impossible to produce an ill-formed term or formula, because type-correctness is equivalent to well-formedness.
This eliminates the need for separate well-formedness proofs.

Fix a language $L$. We define the type of \textbf{preterms} as follows:
\begin{lstlisting}
inductive preterm (L : Language.{u}) :
    ℕ → Type u
| var : ℕ → preterm 0 -- notation `&`
| func {l : ℕ} : L.functions l → preterm l
| app {l : ℕ} :
    preterm (l + 1) → preterm 0 → preterm l
\end{lstlisting}
A member of \lil{preterm n} is a partially applied term.
If applied to \lil{n} terms, it becomes a term.
We define the type of well-formed terms \lil{term L} to be \lil{preterm L 0}. 


The type of \textbf{preformulas} is defined similarly:
\begin{lstlisting}
inductive preformula (L : Language.{u}) :
    ℕ → Type u
| falsum : preformula 0 -- notation ⊥
| equal : term L → term L → preformula 0
    -- notation ≃
| rel {l : ℕ}, L.relations l → preformula l
| apprel {l : ℕ}, preformula (l + 1) →
    term L → preformula l
| imp : preformula 0 → preformula 0 →
    preformula 0 -- notation ⟹
| all : preformula 0 → preformula 0
    -- notation ∀'
\end{lstlisting}
We choose this definition of \lil{preformula} to mimic \lil{preterm}.
A member of \lil{preformula n} is a partially applied formula, and if applied to \lil{n} terms, it becomes a formula.
The type of well-formed formulas \lil{formula L} is defined to be \lil{preformula L 0}.
Implication is the only primitive binary connective and universal quantification is the only primitive quantifier. Since we use classical logic, we can define the other connectives and quantifiers from these.
Note that implication and the universal quantifier cannot be applied to preformulas that are not fully applied.

It is also possible to define well-typed terms and formulas using vectors of terms and nested inductive types. However, we avoided these kinds of definitions because Lean has limited support for nested inductive types. In the case of formulas, this would not even result in a nested inductive type, but we found it more convenient to adapt operations and proofs from \lil{preterm} to \lil{preformula} using our definition.

We use de Bruijn indices to avoid variable shadowing. This means that the variable \lil{&m} under \lil{k} is bound if $m<k$ and otherwise represents the $(m-k)$-th free variable.
We define the usual operations of lifting and substitution for terms and formulas, needed when using de Bruijn variables.
The notation \lil{t ↑' n # m} means the preterm of preformula \lil{t} where all variables which are at least \lil{m} are increased by \lil{n}.
The lift \lil{t ↑' n # 0} is abbreviated to \lil{t ↑ n}.
The substitution \lil{t[s // n]} is defined to be the term or formula \lil{t} where all variables that represent the \lil{n}-th free variable are replaced by \lil{s}.
More specifically, if an occurrence of a variable \lil{&(n+k)} is under \lil{k} quantifiers, then it is replaced by \lil{s ↑ (n+k)}.
Variables \lil{&m} for $m>n+k$ are replaced by \lil{&(m-1)}.

Our proof system is a natural deduction calculus, and all rules are motivated to work well with backwards-reasoning. The type of proof trees is given by the following inductive family of types:
\label{def:prf}
\begin{lstlisting}
inductive prf :
    set (formula L) → formula L → Type u
| axm Γ A : A ∈ Γ → prf Γ A
| impI Γ A B : prf (insert A Γ) B →
    prf Γ (A ⟹ B)
| impE Γ A B : prf Γ (A ⟹ B) → prf Γ A →
    prf Γ B
| falsumE Γ A : prf (insert ∼A Γ) ⊥ → prf Γ A
| allI Γ A : prf ((λ f, f ↑ 1) '' Γ) A →
    prf Γ (∀' A)
| allE₂ Γ A t : prf Γ (∀' A) →
    prf Γ (A[t // 0])
| ref Γ t : prf Γ (t ≃ t)
| subst₂ Γ s t f : prf Γ (s ≃ t) →
    prf Γ (f[s // 0]) → prf Γ (f[t // 0])
\end{lstlisting}
In \lil{allI} the notation \lil{(λ f, f ↑ 1) '' Γ} means lifting all free variables in \lil{Γ} by one.
A term of type \lil{prf Γ A}, denoted \lil{Γ ⊢ A}, is a proof tree encoding a derivation of $A$ from $\Gamma$.
We also define provability as the proposition stating that a proof tree exists.

\noindent \begin{minipage}{\columnwidth} 
\begin{lstlisting}
def provable (Γ : set (formula L))
  (f : formula L) : Prop := nonempty (prf Γ f)
\end{lstlisting}
\end{minipage}
Our current formalization does not use the data of proof trees in an essential way, but we defined them so that we can define manipulations on proof trees (like detour elimination) in future projects.
Besides Boolean-valued semantics (\Cref{sect:boolean-semantics}), we also formalize ordinary first-order semantics, and our work includes a formalization of the completeness (and compactness) theorems using Henkin term models.

\subsection{ZFC}
\label{subsect:fol:zfc}

Usually, the language of set theory has one binary relation symbol and no function symbols.
To make the language easier to work with, and to concisely formulate the continuum hypothesis, we conservatively extend \(\ZFC\) with the following function symbols: the empty set \(\varnothing\), ordered pairing \(({-},{-})\), the natural numbers \(\omega\), power set \(\mathcal{P}({-})\) and union \(\bigcup({-})\).
This gives a conservative extension of the regular theory of ZFC, because these function symbols are all definable.
\begin{figure*}
  \begin{center}
  \begin{minipage}{0.86\textwidth}
  \begin{lstlisting}
  axiom_of_emptyset := ∀ x, x ∉ ∅
  axiom_of_ordered_pairs := ∀ x y z w, (x, y) = (z, w) ↔ x = z ∧ y = w
  axiom_of_extensionality := ∀ x y, (∀ z, (z ∈ x ↔ z ∈ y)) → x = y
  axiom_of_union := ∀ u x, x ∈ ⋃ u ↔ ∃ y ∈ u, x ∈ y
  axiom_of_powerset := ∀ z y, y ∈ P(z) ↔ ∀ x ∈ y, x ∈ z
  axiom_of_infinity := ∅ ∈ ω ∧ (∀ x ∈ ω, ∃ y ∈ ω, x ∈ y) ∧ (∃ α, Ord(α) ∧ ω = α) ∧
    ∀ α, Ord(α) → (∅ ∈ α ∧ ∀ x ∈ α, ∃ y ∈ α, x ∈ y) → ω ⊆ α
  axiom_of_regularity := ∀ x, x ≠ ∅ → ∃ y ∈ x, ∀ z ∈ x, z ∉ y
  zorns_lemma := ∀ z, z ≠ ∅ → (∀ y, (y ⊆ z ∧ ∀ x₁ x₂ ∈ y, x₁ ⊆ x₂ ∨ x₂ ⊆ x₁) → (⋃y) ∈ z) →
    ∃ m ∈ x, ∀ x ∈ z, m ⊆ x → m = x
  axiom_of_collection(ϕ) := ∀ p ∀ A, (∀ x ∈ A, ∃ y, ϕ(x,y,p)) →
    (∃ B, (∀ x ∈ A, ∃ y ∈ B, ϕ(x,y,p)) ∧ ∀ y ∈ B, ∃ x ∈ A, ϕ(x,y,p))

  epsilon_transitive(z) := ∀ x, x ∈ z ⟹ x ⊆ z
  epsilon_trichotomy(z) := ∀ x y ∈ z, x = y ∨ x ∈ y ∨ y ∈ x
  epsilon_wellfounded(z) := ∀ x, x ⊆ z ⟹ x ≠ ∅ → ∃ y ∈ x, ∀ w ∈ x, w ∉ y
  Ord(z) := epsilon_trichotomy(z) ∧ epsilon_wellfounded(z) ∧ epsilon_transitive(z)
  \end{lstlisting}
  \end{minipage}
  \caption{Our formulation of \(\mathsf{ZFC}\).}
  \label{figure:ZFC}
  \end{center}
\end{figure*}

In Figure~\ref{figure:ZFC} we have listed all the axioms of \(\ZFC\) written using names variables (the formalization uses de Bruijn variables). We also include the definition of ordinal, which is used in the axiom of infinity. Note that \lstinline{epsilon_wellfounded} follows for every set from the axiom of regularity, but we add it for the sake of completeness. The only axiom scheme is \lstinline{axiom_of_collection} which ranges over all formulas \lstinline{ϕ(x,y,p)} with (at most) \lstinline{n+2} free variables, where \lstinline{p} is a vector of length \lstinline{n}.




Now \(\CH\) is defined to be the sentence
\[\CH:=\forall x, \Ord(x) \Rightarrow x \le \omega \vee \mathcal{P}(\omega) \le x,\]
where \(x \le y\) means that there is a surjection from a subset of \(y\) to \(x\). In code, we have:
\begin{lstlisting}
def CH_formula : formula L_ZFC :=
∀' (is_ordinal ⟹
  leq_f[omega_t//1] ⊔ leq_f[Powerset_t omega_t//0])
\end{lstlisting}
The substitutions ensure that the formulas are applied to the correct arguments, and \lil{⊔} is notation for disjunction.

\section{Boolean-Valued Semantics}
\label{sect:boolean-semantics}
A \textbf{complete Boolean algebra} is a Boolean algebra $\B$ with additional operations infimum $(\bigsqcap)$ and supremum $(\bigsqcup)$ of any subset of $\B$.
We use $\sqcap, \sqcup, \implies, \top$, and $\bot$ to denote meet, join, material implication, top, and bottom. For more details on complete Boolean algebras, we refer the reader to the textbook of Halmos-Givant~\cite{givant2008introduction}.

\begin{defn}\label{def:boolean-valued-structure}
  Fix a language $L$ and a complete Boolean algebra $\B$. A \textbf{$\B$-valued structure} (or \lil{bStructure L 𝔹}) is a type $M$ equipped with the following.
  \begin{itemize}
    \item for every $n$-ary function symbol in a map $M^n \to M$;
    \item for every $n$-ary relation symbol a map $M^n \to \B$;
    \item a function ${\approx}:M\to M\to\B$ that is a Boolean valued congruence relation. This means that e.g.\
    $x\approx y\sqcap y\approx z\le x\approx z$ and that \[\bigsqcap_i x_i\approx y_i \le f(\vec x)\approx f(\vec y).\]
    There are similar conditions for reflexivity, symmetry and congruence for relation symbols.
  \end{itemize}
\end{defn}

Given a preterm \lil{t} in the language, we can realize it in any $\B$-valued structure $M$.
For this, we need to know the free variables in \lil{t}.
To do this conveniently with de Bruijn variables, we say that a (pre)term \lil{t} is \emph{bounded by \lil{l}} if all free variables are less than \lil{l} (i.e. all variables under \lil{k} quantifiers are less than \lil{k+l}).
Given \lil{t : preterm n} which is bounded by \lil{l}, and a realization \lil{v : vector M l} of the free variables, we define the realization $\llbracket t \rrbracket_M^v : M^n \to M$ by structural recursion on $t$.

For a formula $\varphi$ we do the same: we define bounded (pre)formulas, and define an realization $\llbracket \varphi \rrbracket_M^v : M^n \to \B$ by structural recursion.
If $\varphi$ is a sentence, the realization in a structure is just an element of the Boolean algebra: $\llbracket \varphi \rrbracket_M : \B$.

Since the truth values in a Boolean-valued model live inside the Boolean algebra $\B$ instead of just being true or false, we have to take a little care when stating the soundness theorem for Boolean-valued models.
Usually, a soundness theorem states something like ``if $\varphi$ is provable from hypotheses in $C$ then in every model where $C$ holds, $\varphi$ also holds.''
With Boolean truth-values, this is instead stated as an inequality of truth values. 
\begin{defn}
For $\Gamma : \B$ and a $\B$-valued structure $M$ we say that
\textbf{$\Gamma$ forces a sentence $\varphi$ in $M$}, written $\Gamma \Vdash_M \varphi$, if
$\Gamma \le \llbracket \varphi \rrbracket_M$.
We say that a set of sentences $C$ \textbf{models} $\varphi$, written $C \models_{\B}\varphi$, if for all non-empty $\B$-valued structures $M$ we have $\big(\bigsqcap_{\psi\in C}\big \llbracket \psi \rrbracket_M)\Vdash_M \varphi$.
\end{defn}
Using this definition, we can now state the Boolean-valued soundness theorem: \label{boolean-soundness}
\begin{lstlisting}
theorem boolean_soundness {Γ : set (sentence L)}
    {ϕ : sentence L} : Γ ⊢ ϕ → Γ ⊨[𝔹] ϕ
\end{lstlisting}
The proof is a straightforward structural induction.

\section{Boolean-Valued Models of Set Theory}
\label{sect:bset}
\subsection{The Aczel Encoding}
\label{subsect:bset:aczel}
Our starting point is the Aczel encoding of \(\ZFC\) (\cite{aczel1978type, aczel1986type, aczel1982type}) into dependent type theory.
This was implemented in Coq by Werner \cite{werner1997sets}, and in Lean's \textsf{mathlib} by Carneiro \cite{mario1}.
The idea is to take a type universe \lstinline{Type u} and imitate the cumulative hierarchy construction with an inductive type:
\begin{lstlisting}
inductive pSet : Type (u+1)
| mk (α : Type u) (A : α → pSet) : pSet
\end{lstlisting}
For an element \lil{x = ⟨α, A⟩ : pSet}, the function \lil{A} points to the elements of \lil{x}.
We can define the empty set as \lstinline{∅ := ⟨empty, empty.elim⟩ : pSet}.
Note that \lil{pSet} does not satisfy the axiom of extensionality.
In order to obtain a model where the axiom of extensionality holds, we must quotient \lstinline{pSet} by \emph{extensional equivalence}:
\begin{lstlisting}
def equiv : pSet → pSet → Prop
| ⟨α,A⟩ ⟨β,B⟩ := (∀a, ∃b, equiv (A a) (B b)) ∧
  (∀b, ∃a, equiv (A a) (B b))
\end{lstlisting}
One can then define membership from equivalence and check that modulo extensional equivalence, \lstinline{pSet} is a model of \(\ZFC\).

\subsection{Boolean-Valued Sets}
\label{subsect:bset:bset}

We now want to generalize \lil{pSet} to a Boolean-valued model of \(\ZFC\). We must give a
\(\B\)-valued predicate interpreting the membership symbol \lil{∈}. We will encode this information by extending each \lil{⟨α,A⟩ : pSet} with an additional function \lil{B : α → 𝔹}, which has the effect of attaching a \emph{Boolean truth-value} to every element of \lil{⟨α,A⟩}:
\begin{lstlisting}
inductive bSet (𝔹 : Type u)
    [complete_boolean_algebra 𝔹] : Type (u+1)
| mk (α : Type u) (A : α → bSet)
    (B : α → 𝔹) : bSet
\end{lstlisting}
The \lil{𝔹}-valued predicate \lil{B} expresses that \lil{A a ∈ ⟨α, A, B⟩} has truth value (at least) \lil{B i}. For convenience, if \lstinline{x : bSet 𝔹} and \lstinline{x := ⟨α, A, B⟩}, we put \lstinline{x.type := α, x.func := A, x.bval := B}.

One can also be led to this construction by considering the recursive \emph{name}-construction from forcing, a key ingredient to building forcing extensions. Let \(\mathbb{P}\) be a poset. From e.g. (Kunen \cite{kunen2014set}, Definition IV.2.5):
\begin{defn}
  A set \(\tau\) is a \(\mathbb{P}\)-name iff \(\tau\) is a relation and for all \(\langle  \sigma, p\rangle \in \tau\) we have that \(\sigma\) is a \(\mathbb{P}\)-name and \(p \in \mathbb{P}\).
\end{defn}

In particular, if \(\mathbb{P}\) is the singleton poset, then a \(\mathbb{P}\)-name is merely a set of \(\mathbb{P}\)-names, in the same way that a term of type \lstinline{pSet} is a type-indexed collection of terms of type \lstinline{pSet}.
Reversing this observation, we can replace \(\mathbb{P}\) with a complete Boolean algebra \(\mathbb{B}\) and generalize the definition of \lstinline{pSet.mk} with a third field, so that as in the case of \(\mathbb{P}\)-names, every element of a set is assigned an element (a ``Boolean truth-value'') of \(\mathbb{B}\), again giving us \lil{bSet 𝔹}. Thus, \lil{bSet 𝔹} should be thought of as the type of \lil{𝔹}-names.


\paragraph{Boolean-Valued Equality and Membership}
We can define Boolean-valued equality and membership analogously to the definitions in \lil{pSet}.
To do this, we translate quantifiers and connectives into operations on $\B$:
\begin{lstlisting}
def bv_eq : bSet 𝔹 → bSet 𝔹 → 𝔹
| ⟨α, A, B⟩ ⟨α', A', B'⟩ :=
  (⨅a, B a ⟹ ⨆a', B' a' ⊓ bv_eq (A a) (A' a')) ⊓
  (⨅a', B' a' ⟹ ⨆a, B a ⊓ bv_eq (A a) (A' a'))
\end{lstlisting}
We abbreviate \lil{bv_eq} with the infix operator \lil{=ᴮ}.
It is now easy to define $\B$-valued membership, which we denote by \lil{∈ᴮ}.
\begin{lstlisting}
def mem : bSet 𝔹 → bSet 𝔹 → 𝔹
| x ⟨α, A, B⟩ := ⨆a, B a ⊓ x =ᴮ A a
\end{lstlisting}
While standard treatments of Boolean-valued models of \(\ZFC\) mutually define equivalence and membership so that the axiom of extensionality follows definitionally (\cite{bell2011set}, \cite{hamkins2012well}), the induction principle given by the non-mutual definition is easier to work with in our formalization.

\subsection{The Fundamental Theorem of Forcing}
\label{subsect:bset:fundamental-thm}
The fundamental theorem of forcing for Boolean-valued models~\cite{hamkins2012well} states that for any complete Boolean algebra \lil{𝔹}, the type \lil{bSet 𝔹} forms a Boolean-valued model of $\ZFC$.


We mostly follow Bell~\cite{bell2011set} for the verification of the \(\ZFC\) axioms in \lil{bSet 𝔹}.
Although most of the argument is routine, we describe some aspects of \lil{bSet 𝔹} which are revealed by this verification.

Notably, we can define subsets of a set \lil{x : bSet 𝔹} by just modifying \lil{x.bval}.
This gives a nice definition of powerset:
\begin{defn} \label{def:powerset}
  Fix a $\B$-valued set \lil{x = ⟨α, A, b⟩} and \lil{χ : α → 𝔹} be a function.
  We define the $\B$-valued set $\widetilde{\chi}$ as \lil{⟨α, A, χ⟩}.
  The \textbf{powerset} $\mathcal{P}(x)$ of $x$ is defined to be the \lil{𝔹}-valued set
  \[\text{ \lstinline{set_of_indicator χ :=} }\langle \alpha \to \B, (\lambda\; \chi, \widetilde{\chi}),\ (\lambda\;\chi, \widetilde{\chi} \subseteq^B x)\rangle.\]
\end{defn}

In particular, this gives an easy implementation of the axiom of comprehension (not just for interpretations of formulas, but for any \(\mathbb{B}\)-valued predicate on \lil{bSet 𝔹} satisfying an appropriate \(\mathbb{B}\)-valued congruence lemma): \label{def:comprehension} 
\begin{lstlisting}
lemma bSet_axiom_of_comprehension (ϕ : bSet 𝔹 → 𝔹) (x : bSet 𝔹)
  (H_congr : B_ext ϕ) {Γ : 𝔹} :
  Γ ≤ ⨆ y, y ⊆ᴮ x ⊓ ⨅ z, z ∈ᴮ y ⇔ (z ∈ᴮ x ⊓ ϕ z)
\end{lstlisting}




Following Bell, we verify Zorn's lemma in \lil{bSet 𝔹}.
As is the case with \lil{pSet}, establishing Zorn's lemma requires the use of a choice principle from the metatheory.
This was the hardest part of our verification of the fundamental theorem of forcing, and relies on the technical tool of \emph{mixtures}, which allow sequences of $\B$-valued sets to be ``averaged'' into new ones. Using mixtures, one derives the \emph{maximum principle}, which allows existentially quantified statements to be instantiated without changing their truth-value (so is essentially the axiom of choice):
\begin{lstlisting}
lemma maximum_principle (ϕ : bSet 𝔹 → 𝔹)
(h_congr : B_ext ϕ) : ∃ u, (⨆(x:bSet 𝔹), ϕ x) = ϕ u
\end{lstlisting}
For example, if \lil{x : bSet 𝔹} and \lil{ϕ} is a \lil{𝔹}-valued predicate, if we have that \lil{⊤ ≤ ⨆ j : x.type, ϕ x}, there may not actually be some \lil{j : x.type} which attains that supremum. However, the maximum principle ensures that a witness can be constructed via mixtures.


After we verify the (shallow) statements of all the axioms in \lil{bSet 𝔹}, the last step is to construct a \(\mathbb{B}\)-valued \lil{L_ZFC}-structure, called \lil{V 𝔹}, on \lil{bSet 𝔹}, and check that the interpretations of the axioms are \(\top\). This amounts to proving that the deeply embedded statements correspond to the shallowly embedded statements. This is trivial for the axioms, since it is true by reflexivity, but takes more work for the axiom scheme of collection. This proves the following theorem.
\begin{lstlisting}
theorem fundamental_theorem_of_forcing :
  ⊤ ⊩[V 𝔹] ZFC
\end{lstlisting}
\subsection{Ordinals}
\begin{defn}\label{def:check}
  We define the canonical map \lil{check : pSet → bSet 𝔹} by
  \begin{lstlisting}
def check : pSet → bSet 𝔹
| ⟨α,A⟩ := ⟨α, check ∘ A, (λ a, ⊤)⟩
  \end{lstlisting}
  We write $\widecheck{x}$ for \lil{check x}, and call it a \emph{check-name}.
  These are also known as \emph{canonical names}, as they are the canonical representation of standard two-valued sets inside a Boolean-valued model of set theory.\footnote{We were pleased to discover Lean's support for custom notation allowed us to declare the Unicode modifier character \texttt{U+030C} ($\widecheck{\hspace{1mm}}$) as a postfix operator for \texttt{check}.}
\end{defn}

In general, $\widecheck{x}$ might have different properties than $x$, but \(\Delta_0\) properties (i.e. those definable with only bounded quantification) are always preserved. Importantly, \lil{bSet 𝔹} thinks $\widecheck{\omega}$ is $\omega$. Notably, \lil{ω : pSet} is defined separately from \lil{ordinal.mk omega} (see below) as the finite von Neumann ordinals indexed by $\mathbb{N}$, so the underlying types of \(\omega\) and \(\widecheck{\omega}\) are exactly \(\N\). 

The treatment of ordinals in \lil{mathlib} associates a class of ordinals to every type universe, defined as isomorphism classes of well-ordered types. Lean's ordinals may be represented inside \lil{pSet} by defining a map \lil{ordinal.mk : ordinal → pSet} via transfinite recursion (indexing the von Neumann construction of ordinals). In pseudocode,
\begin{lstlisting}
def ordinal.mk : ordinal → pSet
| 0 := ∅
| succ ξ := pSet.succ (ordinal.mk ξ)
            -- i.e. (mk ξ ∪ {mk ξ})
| is_limit ξ := ⋃ η < ξ, (ordinal.mk η)
\end{lstlisting}
Working internally to any model \(M\) of \(\ZFC\), we can define the class \(\operatorname{Ord}(M)\) as the collection of transitive sets which are well-ordered by their membership relation. While \lil{ordinal.mk} actually induces an order-isomorphism of \lil{pSet}'s ordinals with Lean's ordinals, the map \[\text{\lil{check ∘ ordinal.mk : ordinal → bSet 𝔹}}\] generally fails to surject onto \lil{bSet 𝔹}'s ordinals (in general, these are mixtures of checked ordinals).

We summarize the relationship between the three ``large'' types currently in play:
\[
  \begin{tikzcd}
    \texttt{pSet} \arrow{rr}{\texttt{check}} & & \texttt{bSet } \mathbb{B} & \\
    & & & \\
    \texttt{ordinal.\{u\}} \arrow{uu}{\texttt{ordinal.mk}} \arrow{uurr} & & & 
  \end{tikzcd}
\]

We adopt the convention to spell out the name of Lean ordinals and cardinals, and use (checked) Hebrew letters for their (Boolean-valued) set-theoretic counterparts, e.g.
\begin{lstlisting}
check (ordinal.mk (aleph 1)) = check (ℵ₁) = ℵ₁̌
\end{lstlisting}
We will freely conflate \lil{pSet} ordinals with their underlying types, so e.g. \(\nu : \aleph_2\) means \lil{ν : ℵ₂.type}. (It is always true that the cardinality of \lil{(ordinal.mk κ).type} is \lil{κ}.) Since in general, \(\widecheck{\aleph_1}\) is \emph{not} what \lil{bSet 𝔹} thinks is \(\aleph_1\), we will use a superscript, e.g. \(\aleph_k^{\mathbb{B}}\), to denote the internal alephs of \lil{bSet 𝔹}.


\section{Forcing} \label{sect:forcing}
Our point of departure from conventional accounts of forcing with a poset \(\mathbb{P}\) over a countable transitive model \cite{kunen2014set, jech2013set}, which use a generic filter to ``evaluate'' the \lstinline{ℙ}-names to produce an ordinary model of \(\ZFC\), is to force with \emph{Boolean-valued models} of \(\ZFC\) instead.
As first observed by Scott and Solovay \cite{scott-solovay}, this obviates the need for countable transitive models, generic filters, or the truth and definability lemmas, and allows us to work only with the \lstinline{𝔹}-names.

The cost of taking the \lstinline{𝔹}-names at face value is that the calculus of the forcing relation \cite{shoenfield1971unramified}, a key technical tool in usual forcing arguments, is replaced by the calculation of Boolean truth-values in \lstinline{𝔹}.
From the Boolean-valued perspective, forcing a sentence \(\Phi\) in the language of \(\ZFC\) means constructing some Boolean algebra \lil{𝔹} and a \lil{𝔹}-valued model \(M\) of \(\ZFC\) such that the truth value \(\Phi^{M}\) of \(\Phi\) is \(\top\).
We will always force over a type universe \lil{Type u}, and our Boolean-valued models of \(\ZFC\) are always of the form \lil{bSet 𝔹} for some \lil{𝔹 : Type u}.
That \lil{𝔹} belongs to the ``ground model'' \lil{Type u} is crucial for forcing, as specific choices of \lil{𝔹} will affect the structure of \lil{bSet 𝔹} (and hence the truth-value of \(\Phi\)). 

In this section, we describe two forcing arguments, one for \(\neg \CH\) and another for \(\CH\).
Both follow roughly the same pattern. In both cases, we require the existence of a function; for \(\neg \CH\), an injection \(\aleph_2 \hookrightarrow \mathcal{P}(\omega)\), and for \(\CH\), a surjection \(\aleph_1 \twoheadrightarrow \mathcal{P}(\omega)\).
We will construct a Boolean algebra \lil{𝔹} which encodes the construction (in \lil{Type u}) of such a function \(F\).
Then \lil{𝔹} induces in \lil{bSet 𝔹} an approximation \(\widetilde{F}\) to such a function, which \emph{a priori} is only between check-names.
To finish the forcing argument, we must show that it suffices to work with \(\widetilde{F}\).
This requires a careful study of how truth-values are calculated in \lil{bSet 𝔹},
and ultimately reduces to an analysis of how truth-values of \(\forall\)-\(\exists\) statements in \lil{bSet 𝔹} can be \emph{reflected back} to \lil{Type u}, and a verification of a combinatorial condition on \lil{𝔹}.

\subsection{Regular Open Algebras}
\begin{defn}\label{def:regular-open-algebra}
  Let $X$ be a topological space, and for any open set $U$, let $U^{\perp}$ denote the complement of the closure of $U$.
  The \textbf{regular open algebra} of a topological space $X$, written $\operatorname{RO}(X)$, is the collection of all open sets $U$ such that $U = (U^\perp)^\perp$, or equivalently such that
  $U$ is equal to the interior of the closure of $U$.
  $\operatorname{RO}(X)$ is equipped with the structure of a complete Boolean algebra, with $x \sqcap y := x \cap y$ and $x \sqcup y := ((x \cup y)^\perp)^\perp$ and $\neg x := x^\perp$ and $\bigsqcup x_i := ((\bigcup x_i)^\perp)^\perp$.
\end{defn}

While forcing conditions usually present themselves as a poset instead of a complete Boolean algebra, any forcing poset can be represented as the dense suborder of a regular open algebra \cite{moore2019method}.

\begin{defn}\label{def:dense-suborder}
  A \textbf{dense suborder} of \(\B\) is a subset \(\mathbb{P} \subseteq \B\) satisfying the following conditions: (1) for all \(p \in \mathbb{P}\), \(\bot < p\); (2) for all \(\bot < b \in \B\), there exists a \(p \in \mathbb{P}\) such that \(p \leq b\).
\end{defn}

We will use the following combinatorial conditions on \(\mathbb{B}\) in our forcing arguments:

\begin{defn}\label{def:ccc}
We say that $\B$ has the \textbf{countable chain condition} (CCC) if every antichain $\mathcal{A} : I \to \B$ (i.e. an indexed collection of elements $\mathcal{A} = \{a_i\}_i$ such that whenever $i \neq j, a_i \sqcap a_j = \bot$) has a countable image.
\end{defn}

\begin{defn}\label{def:sigma-closed}
We say that \(\B\) is \textbf{\(\sigma\)-closed} if there exists a dense suborder \(\mathbb{P}\) of \(\B\) such that every \(\omega\)-indexed downwards chain \(p_0 \geq \cdots \geq p_n \cdots\) in \(\mathbb{P}\) has a lower bound \(p_{\omega}\) in \(\mathbb{P}\).
\end{defn}

\subsection{Cohen Forcing}

As we have already seen in \Cref{def:powerset}, we construct the powerset of a \lil{𝔹}-valued set \lstinline{u : bSet 𝔹} using \lil{𝔹}-valued indicator functions \lil{χ : u.type → 𝔹}.
The basic strategy of Cohen forcing is to choose \lil{𝔹} such that for every \lil{ν : ℵ₂}, there is a canonical indicator function (a ``Cohen real'') \(\chi_{\nu} : \N \to \mathbb{B}\).
This is an external function (a member of a function type of \lstinline{Type u}) which descends to an injective function \(\widecheck{\aleph_2} \hookrightarrow \mathcal{P}(\omega)\) in \lil{bSet 𝔹}.

To show that the injection \(\widecheck{\aleph_2} \hookrightarrow \mathcal{P}(\omega)\)  suffices to negate \(\CH\), we will show that if \lil{𝔹} has the CCC, then \(\omega \prec \widecheck{\aleph_1} \prec \widecheck{\aleph_2}\), where $x\prec y$ means that there is no surjection from a subset of $x$ to $y$. We then ensure that \lil{𝔹} has this property by applying a powerful combinatorial argument called the \emph{\(\Delta\)-system lemma}.

\begin{defn}
  The \textbf{Cohen poset} for adding $\aleph_2$-many Cohen reals is the collection of all finite partial functions $\aleph_2 \times \mathbb{N} \to \mathbf{2}$, ordered by reverse inclusion.
\end{defn}

In the formalization, the Cohen poset is represented as a structure with three fields:
\begin{lstlisting}
structure ℙ_cohen : Type :=
  (ins : finset (ℵ₂.type × ℕ))
  (out : finset (ℵ₂.type × ℕ))
  (H : ins ∩ out = ∅)
\end{lstlisting}
That is, we identify a finite partial function \lil{f} with the triple \lil{⟨f.ins, f.out, f.H⟩}, where \lil{f.ins} is the preimage of $\{1\}$, \lil{f.out} is the preimage of $\{0\}$, and \lil{f.H} ensures that \lil{f} is well-defined.
While the members of the Cohen poset are usually defined as finite partial functions, we found that in practice \lil{f} is only needed to give a finite partial specification of a subset of $\aleph_2 \times \mathbb{N}$ (i.e. a finite set \lil{f.ins} which \emph{must} be in the subset, and a finite set \lil{f.out} which \emph{must not} be in the subset).
We chose this representation to make that information immediately accessible.

The Boolean algebra which we use for forcing $\neg\mathsf{CH}$ is
\[\mathbb{B}_{\mathsf{cohen}} := \operatorname{RO}(2^{\aleph_2 \times \mathbb{N}})\]
where we equip $2^{\aleph_2 \times \mathbb{N}}$ with the usual product space topology.

\begin{defn}
  We define the \textbf{canonical embedding} of the Cohen poset into \(\B_{\mathsf{cohen}}\) as follows:
\begin{lstlisting}
def ι : ℙ_cohen → 𝔹_cohen :=
λ p, {S | p.ins ⊆ S ∧ p.out ⊆ - S}
\end{lstlisting}
That is, we send each \(c : \mathbb{P}_{\mathsf{cohen}}\) to all subsets satisfying the specification given by \lil{c}. This is clopen, hence regular.
\end{defn}

Crucially, the image of this embedding is a dense suborder of \(\mathbb{B}_{\mathsf{cohen}}\).
This is essentially because the image of $\iota : \mathbb{P}_{\mathsf{cohen}} \to \B_{\mathsf{cohen}}$ \emph{is} the standard basis for the product topology.
Our chosen encoding of the Cohen poset also made it easier to perform this identification.

\begin{defn}\label{def:cohen-real}
  Let \(\nu : \aleph_2\). For any $n : \N$, the collection of all subsets of $\aleph_2 \times \N$ which contain $(\nu, n)$ is a regular open of $2^{\aleph_2 \times \N}$, denoted $\mathbf{P}_{(\nu, n)}$.
Thus, we associate to $\nu$ the $\B$-valued indicator function $\chi_{\nu} : \N \to \B$ defined by $\chi_{\nu}(n) := \mathbf{P}_{(\nu, n)}$.
  By \Cref{def:powerset}, each \(\chi_{\nu}\) induces a new $\B$-valued subset $\widetilde{\chi_{\nu}} \subseteq \widecheck{\N}$. We call $\widetilde{\chi_{\nu}}$ a \textbf{Cohen real}.
\end{defn}
\Cref{def:cohen-real} gives us an $\aleph_2$-indexed family of Cohen reals.
Converting this data into an injective function from \(\widecheck{\aleph_2}\) to $\mathcal{P}(\mathbb{N})$ inside \lil{bSet 𝔹} requires some care.
One must check that $\nu \mapsto \widetilde{\chi_{\nu}}$ is externally injective, and this is where the characterization of the Cohen poset as a dense subset of $\B$ (and moving back and forth between this representation and the definition as finite partial functions) comes in.

To finish negating \(\mathsf{CH}\), it suffices to show that \(\omega \prec \widecheck{\aleph_1} \prec \widecheck{\aleph_2}\),
i.e. that there is no surjection \(\widecheck{\omega} \twoheadrightarrow \widecheck{\aleph_1}\) and no surjection \(\widecheck{\aleph_1} \twoheadrightarrow \widecheck{\aleph_2}\).
We describe how we proved the latter claim; an identical argument can be used to show the former.

The strategy of the proof is to assume that there is a surjection \(\widecheck{\aleph_1} \twoheadrightarrow \widecheck{\aleph_2}\).
This surjectivity assumption is a Boolean-valued \(\forall\)-\(\exists\) statement about check-names, and we will \emph{reflect} it into the metatheory, producing a \(\forall\)-\(\exists\) statement about the non-checked counterparts in \lil{pSet}.
We will then use the CCC, a combinatorial condition on \(\B_{\mathsf{cohen}}\), to show that the reflected \(\forall\)-\(\exists\) statement implies a contradiction.

Specifically, we use the following lemma, which is true for general \(\mathbb{B}\):
\begin{lstlisting}
lemma AE_of_check_larger_than_check {x y : pSet}
  (f : bSet 𝔹) {Γ : 𝔹} (H_nonzero : ⊥ < Γ)
  (H : Γ ≤ is_surj_onto x̌ y̌ f) (Hy : ∃ z, z ∈ y) :
  ∀ i : y.type, ∃ j : x.type,
  ⊥ < is_func f ⊓ pair (x.func j)̌  (y.func i)̌  ∈ᴮ f
\end{lstlisting}
Suppose that there is a surjection \(\widecheck{\aleph_1} \twoheadrightarrow \widecheck{\aleph_2}\).
Applying this lemma to \(x := \widecheck{\aleph_1}\), \(y := \widecheck{\aleph_2}\), we obtain a \(\forall\)-\(\exists\) statement in the metatheory to which we can apply Lean's axiom of choice to produce a function \(g : \aleph_2 \to \aleph_1\).
Since externally, we know that \(\aleph_1 \prec \aleph_2\), it follows from the infinite pigeonhole principle that \(g\) must have an uncountable fiber over some \(\nu < \aleph_1\).
For every \(\eta \in g^{-1}(\{\nu\})\), let \(A_{\eta}\) be the element of \(\B_{\mathsf{cohen}}\) given by the lemma, i.e.
\[\text{\lil{(is_func f) ⊓ (pair (ℵ₁.func ν)̌  (ℵ₂.func η)̌  ∈ᴮ f)}}.\]
Because each \(A_{\eta}\) has as a conjunct the knowledge that \(f\) is a function, for \(\eta_1 \neq \eta_2\), \(A_{\eta_1}\) and \(A_{\eta_2}\) are incompatible, i.e. \(A_{\eta_1} \sqcap A_{\eta_2} = \bot\).
Since the lemma guarantees that each \(A_{\eta}\) is nonzero, the \(A_{\eta}\) form an uncountable antichain.
Therefore, if \(\mathbb{B}\) has the CCC, there is a contradiction. By \Cref{lemma:cohen-algebra-CCC}, \(\neg\mathsf{CH}\) is forced true in \(\text{\lil{bSet}}\ \mathbb{B}_{\mathsf{cohen}}\).

In our formalization, we actually prove a more general version of this argument, replacing \(\aleph_1\) and \(\aleph_2\) with any two infinite regular cardinals \(\kappa_1 < \kappa_2\).

\paragraph{CCC and the \(\Delta\)-system lemma}
To show that \(\B_{\mathsf{cohen}}\) has the CCC, we formalize and then apply a general result in transfinite combinatorics called the \emph{$\Delta$-system lemma}.
Though only briefly mentioned in \cite{DBLP:conf/itp/HanD19}, this was one of the most involved parts of our formalization of Cohen forcing, as it was a technical result in infinitary combinatorics. 
The details of the full argument are too technical to give here, so we omit the proofs in this section.


A family $(A_i)_i$ of sets is called a \textbf{$\Delta$-system} if there is a set $r$, called the \textbf{root} such that whenever $i \ne j$ we have $A_i \cap A_j = r$.
We write $c^{<\kappa}$ for the supremum of $c^\rho$ for $\rho<\kappa$.

\begin{lemma}[\(\Delta\)-system lemma (Theorem 1.6, \cite{kunen2014set})]\label{lemma:delta-system-lemma:general}
  Let \(\kappa\) be an infinite cardinal and let \(\theta > \kappa\) be regular,
  such that for all \(\alpha < \theta$ we have $\alpha^{<\kappa} < \theta\).
  For any family \(\{A_i\}_{i\in I}\) such that \(|I| \geq \theta\) and for all \(i\), \(|A_i| < \kappa\),
  there is a subfamily of size \(\theta\) which forms a \(\Delta\)-system.
\end{lemma}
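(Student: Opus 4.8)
The plan is to follow the classical argument (Kunen~\cite{kunen2014set}). First I would carry out the standard reductions. Discarding indices, assume $|I| = \theta$. Since $|\bigcup_{i\in I}A_i| \le \theta\cdot\kappa = \theta$, inject $\bigcup_{i\in I}A_i$ into $\theta$ and replace each $A_i$ by its image; a $\Delta$-system among the images pulls back along the injection, so assume each $A_i \subseteq \theta$. As $\theta$ is regular and $|A_i| < \kappa < \theta$, every $A_i$ is bounded in $\theta$ and has order type $< \kappa$; there are only $\le \kappa < \theta$ order types, so by regularity of $\theta$ a subfamily of size $\theta$ shares a common order type $\rho < \kappa$, and I write $a_i\colon\rho\to A_i$ for the increasing enumeration.

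The substance of the proof is two transfinite recursions. The first, of length $\le\rho$, peels off a common initial segment $r$ which will be the root. I would maintain for $\xi\le\rho$ a subfamily $A^{(\xi)}$ of size $\theta$ on which the ``$\xi$-head'' $\{a_i(\zeta):\zeta<\xi\}$ is a fixed set $r_\xi$ with $|r_\xi| < \kappa$. At step $\xi\mapsto\xi+1$: if the $\xi$-th column is \emph{uniformly unbounded} (for every $\gamma<\theta$ there are $\theta$-many $i\in A^{(\xi)}$ with $a_i(\xi)>\gamma$), stop; otherwise fix $\gamma<\theta$ with fewer than $\theta$ such indices, note that the still-$\theta$-many indices with $a_i(\xi)\le\gamma$ have their $(\xi{+}1)$-heads inside the family of ${<}\kappa$-sized subsets of some $\gamma'<\theta$, which has size $(\gamma')^{<\kappa}<\theta$ by the arithmetic hypothesis, and take $A^{(\xi+1)}$ to be a size-$\theta$ sub-subfamily on which that head is constant. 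The key point legitimizing the recursion is that each discarded set $A^{(\xi)}\setminus A^{(\xi+1)}$ is a union of fewer than $\theta$ classes each of size $<\theta$, hence of size $<\theta$; so at a limit $\xi$, $A^{(0)}\setminus\bigcap_{\zeta<\xi}A^{(\zeta)} = \bigcup_{\zeta<\xi}(A^{(\zeta)}\setminus A^{(\zeta+1)})$ has size $<\theta$ — using $\xi\le\rho<\kappa<\theta=\cf(\theta)$ — so $A^{(\xi)}:=\bigcap_{\zeta<\xi}A^{(\zeta)}$ is still of size $\theta$ with common head $r_\xi:=\bigcup_{\zeta<\xi}r_\zeta$. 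If the recursion reaches $\rho$ we have $\theta$-many identical $A_i$, a trivial $\Delta$-system; otherwise it stops at some $\xi^*<\rho$ with $A^*:=A^{(\xi^*)}$ of size $\theta$, common head $r^*$, and column $\xi^*$ uniformly unbounded. The second recursion, of length $\theta$, then chooses $i_\zeta\in A^*$ with $a_{i_\zeta}(\xi^*) > \sup\bigl(r^*\cup\bigcup_{\eta<\zeta}A_{i_\eta}\bigr)$ — always possible since $\theta$ is regular, each $|A_{i_\eta}|<\kappa<\theta$, and column $\xi^*$ is uniformly unbounded — so that the tails $\{a_{i_\zeta}(\zeta'):\xi^*\le\zeta'<\rho\}$ are pairwise disjoint and disjoint from $r^*$; since each $A_{i_\zeta}$ is the disjoint union of $r^*$ and its tail, $\{A_{i_\zeta}:\zeta<\theta\}$ is a $\Delta$-system with root $r^*$.

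The step I expect to be the main obstacle is the first recursion: keeping the three invariants — subfamily of size $\theta$, genuinely constant $\xi$-head, small discarded piece — in sync across limit stages, and the repeated careful appeals to $\alpha^{<\kappa}<\theta$. In the formalization this is aggravated by having to assemble, from \textsf{mathlib}, the supporting arithmetic — $\theta\cdot\kappa=\theta$; a supremum of fewer than $\theta$ ordinals each below a regular $\theta$ is below $\theta$; the collection of ${<}\kappa$-sized subsets of $\gamma$ has size $\gamma^{<\kappa}$; a regular $\theta$ is not a union of fewer than $\theta$ sets each of size $<\theta$ — and by having to present both recursions (one of length $\le\rho$, one of length $\theta$, each carrying an accumulating parameter recording what has been used so far) in the shape Lean's well-founded recursion expects.
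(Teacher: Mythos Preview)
Your reductions and your second (length-$\theta$) recursion are sound; the latter is exactly what the paper singles out as ``defining a sequence by transfinite recursion, while simultaneously proving that the sequence \ldots\ lies below $\theta$.'' The gap is in your \emph{first} recursion. At a successor step you pick one $(\xi{+}1)$-head class of size $\theta$ and then assert that the discard $A^{(\xi)}\setminus A^{(\xi+1)}$ is ``a union of fewer than $\theta$ classes each of size $<\theta$.'' The count of classes is fine, but ``each of size $<\theta$'' is not justified: once you have bounded $a_i(\xi)\le\gamma$ and chosen one size-$\theta$ head-class, nothing rules out a \emph{second} head-class of size $\theta$. Concretely, split $I=\theta$ into blocks $(T_n)_{n<\omega}$ each of size $\theta$ and arrange the family so that at stage $n$ both $T_n$ and $\bigcup_{m>n}T_m$ appear as size-$\theta$ head-classes; choosing the latter at every step discards one $T_n$ per step and leaves $\bigcap_{n}A^{(n)}=\varnothing$. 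So your ``small discarded piece'' invariant is not maintained, and the length-$\rho$ recursion does not close up at limits as written.

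The standard Kunen argument (which the paper follows) does not carry a shrinking $\rho$-chain whose limit must stay of size $\theta$. Instead one runs only the single length-$\theta$ recursion---choosing indices $\alpha_\zeta$ with $A_{\alpha_\zeta}\not\subseteq\sigma_\zeta:=\sup_{\eta<\zeta}(\sup A_{\alpha_\eta}+1)$, checking alongside that $\sigma_\zeta<\theta$---and extracts the root \emph{afterwards} by pigeonhole: first on the order type of $A_{\alpha_\zeta}\cap\sigma_\zeta$ (fewer than $\kappa$ possibilities), then on that initial segment itself once it has been confined below some fixed $\delta<\theta$ (fewer than $\delta^{<\kappa}<\theta$ possibilities). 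No intersection of a decreasing $\rho$-chain of $\theta$-sized sets is ever needed. Your overall decomposition ``determine root, then spread tails'' is correct in spirit; the repair is to obtain the root by pigeonhole \emph{after} the $\theta$-recursion rather than by a prior length-$\rho$ recursion.
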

The formalization closely follows the proof given in Kunen \cite[Chapter 2, Theorem 1.6]{kunen2014set}.
The proof involves tricky reasoning steps involving ordinals, which are common in infinitary combinatorics.
It starts by assuming that without loss of generality $\bigcup_i A_i\subseteq\theta$, so that all the $A_i$ are well-ordered, and by assuming that all $A_i$ have the same order-type.
These simplifying assumptions are harder to formalize, because that involves actually proving the general case from the special case.
It also involves defining a sequence by transfinite recursion, while simultaneously proving that the sequence has certain properties (lies below $\theta$).

In the formalization, the fact that the type of ordinals is a large type, i.e.\ lives one universe level higher than the types it is built from, causes difficulties. (These difficulties were also present earlier, because whenever we use e.g. ``\lil{ℵ₂.type}'', we are actually referring to a nonconstructively chosen witness for the order type of all the ordinals less than \lil{aleph 2}.)
The reason is that the original proof heavily uses sets of ordinals, and taking their order types, but in Lean this would involve calculating in both \lstinline"ordinal.{u}" and \lstinline"ordinal.{u+1}".
Instead, we frequently work with well-orders of a given order type, instead sets of ordinals, to do all computations in \lstinline"ordinal.{u}".

Lastly, one must take care to formulate the $\Delta$-system so that $\{A_i\}_i$ is an indexed family, instead of a collections of sets.
\Cref{thm:product-ccc} below does not follow conveniently from the $\Delta$-system lemma if it is formulated with a collection of sets; \cite{kunen2014set} is somewhat ambiguous about which version is used.

Setting \(\kappa = \omega\) and \(\theta = \aleph_1\) in \Cref{lemma:delta-system-lemma:general} yields:
\begin{lemma}\label{lemma:delta-system-lemma:simple}
  Any uncountable family of finite sets has an uncountable subfamily forming a $\Delta$-system.
\end{lemma}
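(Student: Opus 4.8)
The plan is to obtain this as the special case $\kappa = \omega$, $\theta = \aleph_1$ of \Cref{lemma:delta-system-lemma:general}, so the only real work is checking that the hypotheses of that lemma are satisfied for this choice of parameters. First I would record that $\omega$ is an infinite cardinal and $\aleph_1$ is a regular cardinal with $\aleph_1 > \omega$ — both standard facts available in \textsf{mathlib}. The remaining hypothesis is that $\alpha^{<\omega} < \aleph_1$ for every $\alpha < \aleph_1$. Since $c^{<\omega}$ is by definition the supremum of $c^{\rho}$ over $\rho < \omega$, this reduces to showing that a countable supremum of finite powers of a countable cardinal is countable: each $|\alpha|^{n}$ with $|\alpha| \le \omega$ and $n < \omega$ is countable (a finite product of countable sets), and the supremum of countably many countable ordinals is again countable precisely because $\aleph_1$ is regular; hence $\alpha^{<\omega} \le \omega < \aleph_1$.

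With the hypotheses in hand, I would take an arbitrary uncountable indexed family $\{A_i\}_{i\in I}$ of finite sets. "Uncountable" gives $|I| \ge \aleph_1 = \theta$, and "finite" gives $|A_i| < \omega = \kappa$ for every $i$, so \Cref{lemma:delta-system-lemma:general} produces a subfamily of size $\theta = \aleph_1$ forming a $\Delta$-system; a subfamily of cardinality $\aleph_1$ is in particular uncountable, which is exactly the claimed conclusion.

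The only mildly delicate point — and the one place where the formalization requires a genuine argument rather than bookkeeping — is the cardinal-arithmetic fact $\alpha^{<\omega} < \aleph_1$, which rests on the regularity of $\aleph_1$ (to bound the countable supremum) together with closure of the countable cardinals under finite products. Everything else is a direct instantiation of the general lemma, so I would not expect an obstacle there; the only care needed is to keep the family in \Cref{lemma:delta-system-lemma:general} indexed rather than a bare set of sets, in line with the formulation remarked upon just above.
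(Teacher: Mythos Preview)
Your proposal is correct and matches the paper's approach exactly: the paper obtains this lemma by ``setting \(\kappa = \omega\) and \(\theta = \aleph_1\)'' in \Cref{lemma:delta-system-lemma:general}, and you have spelled out the verification of the hypotheses (notably \(\alpha^{<\omega} < \aleph_1\)) that the paper leaves implicit. Your remark about keeping the family indexed also aligns with the paper's caveat just above the lemma.
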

We say that a topological space has the CCC if every family of pairwise disjoint open sets is countable. The proof of the following can be found in \cite{DBLP:conf/itp/HanD19}.
\begin{thm}\label{thm:product-ccc}
  For any family $(X_i)_{i\in I}$ of topological spaces, $\prod_{i\in I} X_i$ has the CCC if for every finite $J\subseteq I$, the product $\prod_{i\in J} X_i$ has the CCC.
\end{thm}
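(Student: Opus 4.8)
The plan is to argue by contradiction, reducing the failure of CCC in the full product to its failure in a finite subproduct via the $\Delta$-system lemma. We may assume every $X_i$ is nonempty, since otherwise $\prod_{i\in I}X_i=\varnothing$ and the claim is trivial. So suppose $\prod_{i\in I}X_i$ does not have the CCC, witnessed by an uncountable family $(V_\alpha)_{\alpha\in A}$ of nonempty, pairwise disjoint open sets. Since the sets $\bigcap_{i\in J}\pi_i^{-1}(W_i)$ with $J\subseteq I$ finite and each $W_i\subseteq X_i$ open form a basis for the product topology, I would first shrink each $V_\alpha$ to a nonempty basic open $U_\alpha\subseteq V_\alpha$; the $U_\alpha$ remain pairwise disjoint and uncountably many. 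Record each $U_\alpha$ as a finite \emph{support} $J_\alpha\subseteq I$ together with open sets $W_{\alpha,i}\subseteq X_i$ for $i\in J_\alpha$, so that $U_\alpha=\bigcap_{i\in J_\alpha}\pi_i^{-1}(W_{\alpha,i})$ and each $W_{\alpha,i}\neq\varnothing$.

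Next, apply Lemma~\ref{lemma:delta-system-lemma:simple} to the uncountable indexed family $(J_\alpha)_{\alpha\in A}$ of finite sets: there is an uncountable $A'\subseteq A$ and a finite root $r$ with $J_\alpha\cap J_\beta=r$ for all distinct $\alpha,\beta\in A'$; in particular $r\subseteq J_\alpha$ for every $\alpha\in A'$. Consider the finite subproduct $\prod_{i\in r}X_i$ and the sets $U'_\alpha:=\prod_{i\in r}W_{\alpha,i}$ for $\alpha\in A'$. Each $U'_\alpha$ is open and nonempty, and in fact $U'_\alpha=\pi_r[U_\alpha]$, where $\pi_r$ is the projection onto $\prod_{i\in r}X_i$. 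I claim the $U'_\alpha$ are pairwise disjoint: given distinct $\alpha,\beta\in A'$ and a hypothetical $x\in U'_\alpha\cap U'_\beta$, use the disjointness of $J_\alpha\setminus r$ and $J_\beta\setminus r$ to amalgamate $x$, a witness for $U_\alpha$ on $J_\alpha\setminus r$, a witness for $U_\beta$ on $J_\beta\setminus r$, and arbitrary values elsewhere, into a point $y\in U_\alpha\cap U_\beta$ --- contradicting disjointness of the $U$'s. Hence $(U'_\alpha)_{\alpha\in A'}$ is an uncountable family of nonempty pairwise disjoint open sets in $\prod_{i\in r}X_i$, contradicting the hypothesis that every finite subproduct has the CCC, and proving Theorem~\ref{thm:product-ccc}.

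The main obstacles are precisely the two bookkeeping steps, which is why the paper insists on the indexed-family formulation of the $\Delta$-system lemma. First, passing from arbitrary open sets to basic opens and extracting a genuine support function $\alpha\mapsto J_\alpha$: distinct $U_\alpha$ may well share the same support, so a version of the $\Delta$-system lemma phrased for collections of sets rather than indexed families would force an awkward case split (uncountably many $U_\alpha$ with a common support $J$, handled directly against the CCC of $\prod_{i\in J}X_i$). Second, the amalgamation producing $y$: in a dependent-type setting this means defining a section of $\prod_{i\in I}X_i$ piecewise along the partition of $I$ into $r$, $J_\alpha\setminus r$, $J_\beta\setminus r$, and the remainder, and then checking membership in both $U_\alpha$ and $U_\beta$. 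The purely topological facts --- that basic opens form a basis, and that $\pi_r[U_\alpha]=\prod_{i\in r}W_{\alpha,i}$ is open --- are routine.
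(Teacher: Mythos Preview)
Your argument is correct and matches the approach the paper takes (deferred to \cite{DBLP:conf/itp/HanD19} but clearly set up here): reduce via basic opens to finite supports, apply the indexed-family $\Delta$-system lemma (\Cref{lemma:delta-system-lemma:simple}) to those supports, and amalgamate over the disjoint parts $J_\alpha\setminus r$ and $J_\beta\setminus r$ to push disjointness down to the root subproduct. Your closing paragraph also correctly identifies the reason the paper insists on the indexed formulation of the $\Delta$-system lemma.
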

From \Cref{thm:product-ccc} and the observation that $2^J$ has the CCC if $J$ is finite, the result follows.
\begin{lemma}\label{lemma:cohen-algebra-CCC}
  \(\B_{\mathsf{cohen}}\) has the CCC.
\end{lemma}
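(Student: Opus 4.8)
The plan is to reduce the countable chain condition for the complete Boolean algebra $\B_{\mathsf{cohen}} = \operatorname{RO}(2^{\aleph_2 \times \N})$ (in the sense of \Cref{def:ccc}) to the \emph{topological} CCC of the underlying product space $2^{\aleph_2\times\N}$, and then obtain that from \Cref{thm:product-ccc}.

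First I would record the bridge between the two notions of CCC: if a topological space $X$ has the topological CCC (every family of pairwise disjoint open sets is countable), then $\operatorname{RO}(X)$ has the CCC of \Cref{def:ccc}. The point is that in $\operatorname{RO}(X)$ the meet is literally intersection and $\bot$ is the empty set (\Cref{def:regular-open-algebra}), so an antichain $\mathcal{A} : I \to \operatorname{RO}(X)$ is exactly an $I$-indexed family of regular open sets that are pairwise disjoint once the value $\varnothing$ is discarded; moreover, two distinct nonempty members of such an antichain must come from distinct indices, hence are disjoint. Thus the image $\mathcal{A}[I]\setminus\{\varnothing\}$ is a family of pairwise disjoint nonempty open sets, which is countable by hypothesis, and adding $\varnothing$ back keeps it countable, so the image of $\mathcal{A}$ is countable.

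Next I would feed the product decomposition $2^{\aleph_2\times\N} = \prod_{(\nu,n)\in\aleph_2\times\N} 2$ (each factor the two-point discrete space, with the product topology) into \Cref{thm:product-ccc}. For any finite $J\subseteq \aleph_2\times\N$, the partial product $2^J$ is a finite discrete space, so it has only finitely many open sets and therefore trivially satisfies the topological CCC. By \Cref{thm:product-ccc} the full product $2^{\aleph_2\times\N}$ has the topological CCC, and then the bridge lemma gives that $\B_{\mathsf{cohen}} = \operatorname{RO}(2^{\aleph_2\times\N})$ has the CCC.

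The genuinely hard work sits upstream of this lemma --- in \Cref{thm:product-ccc} and the $\Delta$-system lemma (\Cref{lemma:delta-system-lemma:simple}) it rests on --- rather than in the assembly above. Within this lemma itself, the one step that needs care is the bridge: matching the Boolean-algebra formulation of an antichain (an indexed family with ``countable image'', \Cref{def:ccc}) against the topological formulation (a set of pairwise disjoint opens), and checking that meet and bottom in the regular open algebra really are intersection and $\varnothing$, so that antichains in $\operatorname{RO}(X)$ are, modulo the bottom element, just families of pairwise disjoint regular opens.
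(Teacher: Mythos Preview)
Your argument is correct and matches the paper's own route: it too derives \Cref{lemma:cohen-algebra-CCC} from \Cref{thm:product-ccc} together with the observation that $2^J$ has the CCC for finite $J$. The bridge you spell out between the topological CCC and the Boolean-algebra CCC of \Cref{def:ccc} is exactly the identification the paper leaves implicit, and your handling of it (meet is intersection, $\bot$ is $\varnothing$, nonempty values in an antichain are injective on indices) is sound.
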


\subsection{Collapse Forcing} \label{subsect:collapse}

Whereas Cohen forcing creates a new injection \(\widecheck{\aleph_2} \hookrightarrow \mathcal{P}(\omega)\), we can use \emph{collapse forcing} to create a new surjection \(F : \aleph_1^{\mathbb{B}} \twoheadrightarrow \mathcal{P}(\omega)\).
Similarly to Cohen forcing, the strategy is to pick \lil{𝔹} such that there is a canonical \lil{𝔹}-valued indicator function on \(\widecheck{\aleph_1} \times \widecheck{\mathcal{P}(\omega)}\) representing the graph of a surjection \(\widetilde{F}\).
To show that \(\widetilde{F}\) suffices to force \(\CH\), we must verify that our choice of \(\mathbb{B}\) is \(\sigma\)-closed.

The formalization of collapse forcing is actually much more involved than the formalization of Cohen forcing. In Cohen forcing, we have to do relatively little work inside of \lil{bSet 𝔹} itself besides proving basic properties of functions. The difficulty is concentrated in proving and applying the CCC, which mostly happens in the metatheory. Moreover, constructing the new function (and the rest of the argument) required no density arguments at all. This is because in order to force \(\neg \CH\), we only had to ensure there was \emph{some} infinite cardinality between \(\omega\) and \(\mathcal{P}(\omega)\) (we did not determine exactly which internal aleph number \(\widecheck{\aleph_1}\) was in \lil{bSet 𝔹}).

However, to force \(\CH\), the quantifiers are flipped and now we must exclude \emph{all} cardinalities between \(\omega\) and \(\mathcal{P}(\omega)\). From cleverly choosing \(\mathbb{B}\), the best we can do is to construct a surjection \(\pi : \widecheck{\aleph_1} \twoheadrightarrow \widecheck{\mathcal{P}(\omega)}\), and we are forced to prove that \(\widecheck{\aleph_1} = \aleph_1^{\mathbb{B}}\) and \(\widecheck{\mathcal{P}(\omega)} = \mathcal{P}(\omega)\). This means we must define and construct \(\aleph_1^{\mathbb{B}}\), entailing, for example, the development of the theory of ordinals internal to \lstinline{bSet 𝔹}. For comparison, our library on set theory in \lstinline{bSet 𝔹} totalled 2723 LOC when we forced \(\neg \CH\), and grew to 7020 LOC after forcing \(\CH\).


\begin{defn}\label{def:collapse-poset}
  We define \(\mathbb{P}_{\mathsf{collapse}}\) to be the poset of countable partial functions \(\aleph_1 \to \mathcal{P}(\omega)\).
  The principal open sets
  \[D_p := \{g : \aleph_1 \to \mathcal{P}(\omega) \hspace{2mm} | \hspace{2mm} g \text{ extends } p\} , \hspace{3mm} p \in \mathbb{P}_{\mathsf{collapse}}\]
  form the basis of a topology \(\tau\) (finer than the product topology) on the function set \(\mathcal{P}(\omega)^{\aleph_1}\).
  We put
  \[\B_{\mathsf{collapse}} := \operatorname{RO}\left(\mathcal{P}(\omega)^{\aleph_1}, \tau\right).\]
\end{defn}

\begin{lemma}\label{lemma:collapse-algebra-sigma-closed}
  \(\B_{\mathsf{collapse}}\) is \(\sigma\)-closed.
\end{lemma}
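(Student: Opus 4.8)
The plan is to exhibit the obvious candidate dense suborder witnessing $\sigma$-closedness: the collection $\mathbb{P} := \{\, D_p \mid p \in \mathbb{P}_{\mathsf{collapse}}\,\}$ of principal open sets from \Cref{def:collapse-poset}, viewed inside $\B_{\mathsf{collapse}}$ via the canonical embedding $\iota \colon p \mapsto D_p$ (entirely analogous to the Cohen case). The first thing to settle is that each $D_p$ really is an element of $\B_{\mathsf{collapse}} = \operatorname{RO}(\mathcal{P}(\omega)^{\aleph_1},\tau)$, i.e.\ that $D_p$ is regular open. I would prove the stronger fact that $D_p$ is \emph{clopen}: it is open by definition of $\tau$, and its complement is open because $\mathcal{P}(\omega)^{\aleph_1}\setminus D_p = \bigcup_{\alpha\in\operatorname{dom}(p)}\{\,g \mid g(\alpha)\neq p(\alpha)\,\}$, where each set $\{\,g \mid g(\alpha)\neq p(\alpha)\,\} = \bigcup_{s\neq p(\alpha)} D_{\{(\alpha,s)\}}$ is a union of basic opens (note $\{(\alpha,s)\}$ is a finite, hence countable, partial function). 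Since clopen sets are regular open, $\mathbb{P}\subseteq\B_{\mathsf{collapse}}$, and $\iota$ is moreover an order-embedding with the reverse-inclusion convention, since $D_p\subseteq D_q$ forces $p$ to extend $q$.

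Next I would check that $\mathbb{P}$ is a dense suborder in the sense of \Cref{def:dense-suborder}. Condition (1), $\bot < D_p$, holds because $D_p$ is nonempty — any $p$ extends to a total function $\aleph_1\to\mathcal{P}(\omega)$, e.g.\ sending everything outside $\operatorname{dom}(p)$ to $\emptyset$ — and a nonempty open set is nonzero in a regular open algebra. Condition (2), that every $\bot < b\in\B_{\mathsf{collapse}}$ dominates some member of $\mathbb{P}$, is immediate from the fact that the $D_p$ form a \emph{basis} for $\tau$: a nonzero $b$ is a nonempty open set, so it contains some nonempty basic open $D_p$, and $D_p\subseteq b$ is precisely $D_p\leq b$ in $\B_{\mathsf{collapse}}$.

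The heart of the argument is closure under $\omega$-indexed descending chains. Given such a chain $D_{p_0}\geq D_{p_1}\geq\cdots$ in $\mathbb{P}$, unwinding $\iota$ turns it into a chain $p_0\subseteq p_1\subseteq\cdots$ of countable partial functions, each $p_{n+1}$ extending $p_n$. Put $p_\omega := \bigcup_n p_n$. Since the $p_n$ are $\subseteq$-increasing, $p_\omega$ is again a (single-valued) partial function; its domain $\bigcup_n\operatorname{dom}(p_n)$ is a countable union of countable sets, hence countable, so $p_\omega\in\mathbb{P}_{\mathsf{collapse}}$; and $p_\omega$ extends every $p_n$, so $D_{p_\omega}\subseteq D_{p_n}$, i.e.\ $D_{p_\omega}\in\mathbb{P}$ is a lower bound. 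This is exactly the point at which \emph{countable} (rather than finite) conditions are essential: a countable union of finite sets need not be finite, so the analogous move fails for the Cohen poset — which is why $\B_{\mathsf{cohen}}$ is only CCC, not $\sigma$-closed.

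I expect the main obstacle to be formalization bookkeeping rather than mathematical depth: keeping straight the passage between $\B_{\mathsf{collapse}}$ as a concrete regular open algebra and $\mathbb{P}_{\mathsf{collapse}}$ as a poset of partial functions (in particular that $\iota$ is order-reflecting, so descending chains of $D_p$'s correspond to ascending chains of functions), and handling the fact that "$\aleph_1$'' and "$\mathcal{P}(\omega)$'' here are \texttt{pSet}s conflated with their underlying types, so that "countable partial function $\aleph_1\to\mathcal{P}(\omega)$" and "a countable union of countable sets is countable" are stated and invoked correctly (the latter appealing to Lean's choice). Within that, I anticipate the clopenness lemma and the basis-to-density step being the fiddliest pieces, while the lower-bound construction itself is routine once the poset/algebra dictionary is in place.
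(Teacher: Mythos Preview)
Your proposal is correct and follows essentially the same approach as the paper: take the principal opens $\{D_p\}$ as the dense suborder (dense because they generate $\tau$), translate a descending chain $D_{p_0}\supseteq D_{p_1}\supseteq\cdots$ into an ascending chain $p_0\subseteq p_1\subseteq\cdots$ of countable partial functions, and set $p_\omega:=\bigcup_n p_n$, which is again a countable partial function and gives the required lower bound $D_{p_\omega}$. Your write-up is more explicit than the paper's (you spell out clopenness of $D_p$ and both conditions of \Cref{def:dense-suborder}, which the paper leaves implicit), but there is no substantive difference in strategy.
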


\begin{proof}
  We show that the collection of principal open sets \(\mathcal{D} := \{D_p\}_p\) forms a dense subset of
  \(\B_{\mathsf{collapse}}\) such that every $\omega$-indexed downwards chain in \(\mathcal{D}\) has a lower bound in \(\mathcal{D}\). Since \(\mathcal{D}\) generates the topology, it is clearly a dense suborder. For an arbitrary \(\omega\)-indexed downwards chain
  \[D_{p_0} \supseteq D_{p_1} \supseteq \cdots \supseteq D_{p_n} \supseteq \cdots,\]
  it follows from the definition of the principal open sets that $p_0 \subseteq p_1 \subseteq \cdots \subseteq p_n \subseteq \cdots$. Then put $p_\omega := \bigcup_i p_i$. Since the union of countable partial functions is a countable partial function, $D_{p_\omega}$ is a lower bound of $\{D_{p_i}\}_i$.
\end{proof}

\begin{remark}
  As an implementation detail, in the formalization we \emph{define} \(\mathbb{P}_{\mathsf{collapse}}\) to be the countable partial functions (in \lil{Type u}) between \lil{(ordinal.mk (aleph one) : pSet).type} and \lil{(powerset omega : pSet).type}, so that \\
  \(\mathbb{B}_{\mathsf{collapse}}\)-valued indicator functions on
  \begin{gather*}\text{\lil{ordinal.mk (aleph one) : pSet).type ×}}\\
   \text{\lil{(powerset omega : pSet).type}}
  \end{gather*}
  are definitionally equal to \(\mathbb{B}_{\mathsf{collapse}}\)-valued indicator functions on the underlying types of \lil{check (ordinal.mk (aleph one))} and \lil{check (powerset omega)}.
\end{remark}

To specify the surjection \(\widecheck{\aleph_1} \twoheadrightarrow \widecheck{\mathcal{P}(\omega)}\), we specify a subset (the graph of the function) of the powerset \(\mathcal{P}(\widecheck{\aleph_1} \times \widecheck{\mathcal{P}(\omega)})\).
In \(\text{\lil{bSet}}\ \mathbb{B}_{\mathsf{collapse}}\), we can do this by specifying the indicator function \(\chi_{\pi}\) of the graph of a function \(\pi : \widecheck{\aleph_1} \to \widecheck{\mathcal{P}(\omega)}\) as follows: to an \(\eta < \aleph_1\) and a subset \(S \subseteq \mathcal{P}(\omega)\) (in \lil{pSet}), we attach the \emph{principal open} (comprising functions extending the singleton countable partial function \(\{(\eta, S)\}\)):
\[
  \chi_\pi (\eta, S) := D_{\{(\eta, S)\}} = \{g : \aleph_1 \to \mathcal{P}(\omega) \operatorname{|} g (\eta) = S\}.
\]

More generally, we formalize conditions over generic \lil{x, y : pSet} and \lil{𝔹} for when a function \lil{af : x.type → y.type → 𝔹} induces a surjection \(\widecheck{x} \to \widecheck{y}\) in \lil{bSet 𝔹}.
By definition, such a function always induces a relation on the product (in \lil{bSet 𝔹}) of \lil{x} and \lil{y}.
Surjectivity is equivalent to \lil{⨅ j, (⨆ i, af i j) = ⊤}, totality is equivalent to \lil{⨅ i, (⨆ j, af i j) = ⊤}, and well-definedness follows from conditions:
\begin{lstlisting}
(∀ i, ∀ j₁ j₂, j₁ ≠ j₂ → af i j₁ ⊓ af i j₂ ≤ ⊥)
(∀ i₁ i₂, ⊥ < (func x i₁) =ᴮ (func x i₂) → i₁ = i₂)
\end{lstlisting}
Both surjectivity and totality of \(\chi_{\pi}\) require \emph{density arguments}, where the definition of indexed supremum (\(\bigsqcup x_i\)) in the regular open algebra as the regularization \(((\bigcup x_i)^\perp)^\perp\) of the set-theoretic union plays a key role: the union of the truth values is not the entire space, but is only a dense open whose regularization is the entire space. In particular, the density argument for surjectivity crucially uses that \(\aleph_1\) is uncountable while \(\omega\) is countable.

To finish demonstrating that \(\CH\) is true in \(\text{\lil{bSet}}\ \mathbb{B}_{\mathsf{collapse}}\), it remains to check that \(\widecheck{\mathcal{P}(\omega)} = \mathcal{P}(\omega)\) and \(\widecheck{\aleph_1} = \aleph_1^{\mathbb{B}}\).
There are two major obstacles. The first is that to even formally state the latter equality, we must construct \(\aleph_1^{\mathbb{B}}\) in \lil{bSet 𝔹}.
While the operation \lil{bv_powerset} (\Cref{def:powerset}) gives a construction of the internal powerset of any \lil{x : bSet 𝔹} (using \lil{𝔹}-valued indicator functions, for any \lil{𝔹}), \(\aleph_1^{\mathbb{B}}\) is only specified as the least ordinal greater than \(\omega\), and does not admit as direct of a construction.
We describe our construction of \(\aleph_1^{\mathbb{B}}\) (as the Hartogs number of \(\omega\)) in \Cref{subsect:forcing:aleph-1}.

Now we must ensure that no new countable ordinals are added to \(\aleph_1\) and that no new subsets of \(\omega\) are added to \(\mathcal{P}(\omega)\) in the passage via \lil{check} from \lil{pSet} to \lil{bSet 𝔹}.
We show this in \Cref{subsect:function-reflection} by proving that we can reflect functions with domain \(\omega\) from \lil{bSet 𝔹} to \lil{pSet}.


\subsection{Construction of \texorpdfstring{$\aleph_1$}{aleph 1}} \label{subsect:forcing:aleph-1}
Instead of using the specification of \(\aleph_1^{\mathbb{B}}\) as the least ordinal larger than \(\omega\) with Cantor's theorem and using the well-foundedness of the ordinals to construct \(\aleph_1\),
we opt for a direct construction of \(\aleph_1\), based on the well-known construction of \(\aleph_1\) as the \textbf{Hartogs number} of \(\omega\) \cite{hartogs1915problem}.


We lay out the basic strategy.
Recall that a term of type \lil{bSet 𝔹} comprises three pieces of information: an indexing type \(\alpha\), an indexing function \lil{A : α → bSet 𝔹}, and a truth-value function \lil{B : α → 𝔹}.
\begin{enumerate}
\item We \emph{define} the underlying type \(\alpha\) for \(\aleph_1^{\mathbb{B}}\) to be \lil{𝒫(ω × ω).type}.
\item We define the truth-value function \lil{B : α → 𝔹} to assign to any \(R \subseteq \omega \times \omega\) the (truth-value of) the sentence,
``there exists an ordinal \(\eta\) and an injection \(f : \eta \hookrightarrow \omega\) such that \(R\) is the image of the membership relation of \(\eta\) under \(f\).''

\item Using the maximum principle (which is essentially \(\mathsf{AC}\)), we define the indexing function \(A\) for \(\aleph_1^{\mathbb{B}}\) by choosing, for every \(R : \alpha\), a witness \(\eta_R\) such that \(R\) is the image of \(\eta\) under an injection into \(\omega\).
That \(A\) surjects onto countable ordinals reduces to the fact that order-isomorphic ordinals must be equal.
\end{enumerate}

\paragraph{Implementation details}
In the formalization, this strategy is implemented in three stages.
First, the axiom of comprehension (\Cref{def:comprehension}) is applied to \(\mathcal{P}(\omega \times \omega)\) to produce (what \lil{bSet 𝔹} thinks is) the collection of all relations \(R\) on \(\omega\) such that \(B(R)\) holds.
This combines steps \(1\) and \(2\) and produces a set \lil{a1'_aux}. Then we \emph{modify} the indexing function \lil{a1'_aux.func} (by using the maximum principle
) to point from \(R\) to a chosen witness \(\eta_R\) for \(R\), producing \lil{a1'}.
Finally, since the ordinals \(0\) and \(1\) both have empty membership relations, it is unprovable in Lean whether \lil{a1'} contains one or the other, so we add both manually, producing \(\aleph_1^{\mathbb{B}}\).

Our implementation differs from the usual construction of Hartogs numbers by \emph{starting} with the sub-well-orders of \(\omega\), rather than taking the class of countable ordinals and later showing it is a set. In this way we avoid performing a smallness argument, at the cost of using the axiom of choice to select witnesses. We remark that our construction does not use specific properties of \(\omega\) and easily generalizes to construct the successor cardinal of any infinite set. Instead of using membership \((<)\), we could have used subset \((\leq)\) instead, which would avoid the intermediate \lil{a1'}, but this would have made other parts of the proof more complex.


\subsection{Function Reflection} \label{subsect:function-reflection}

Suppose given \lil{y : pSet} and \lil{f : bSet 𝔹} such that \lil{bSet 𝔹} models that \lil{f} is a function from \(\omega\) to \(\widecheck{y}\).
We say that \lil{bSet 𝔹} \textbf{reflects \lil{f}} if there exists a \lil{g : pSet} such that \lil{g} is a function from \(\omega\) to \lil{y} in \lil{pSet}, and \lil{bSet 𝔹} models that \(\widecheck{g} = f\).
We say that \lil{bSet 𝔹} \textbf{reflects countable functions} if it reflects all such \lil{f}.
\begin{lemma}\label{lemma:function-reflect-suffices}
  Let \(\mathbb{B}\) be a complete Boolean algebra, and suppose that \lil{bSet 𝔹} reflects countable functions.
  Then \(\widecheck{\mathcal{P}(\omega)} = \mathcal{P}(\omega)\) and \(\widecheck{\aleph_1} = \aleph_1^{\mathbb{B}}\).
\end{lemma}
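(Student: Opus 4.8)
The plan is to prove the two equalities separately, in each case using that \lil{bSet 𝔹} satisfies extensionality (by the fundamental theorem of forcing) to reduce an equality of \(\B\)-valued sets to showing that the two sets have the same elements. In each equality one inclusion will follow from the \(\Delta_0\)-absoluteness of check-names (\Cref{def:check}), and the other is where the reflection hypothesis does its work.

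For \(\widecheck{\mathcal{P}(\omega)} = \mathcal{P}(\omega)\): first observe that \(\widecheck{\mathcal{P}(\omega)} \subseteq^B \mathcal{P}(\omega)\) holds for trivial reasons, since every element of \(\widecheck{\mathcal{P}(\omega)}\) is \(\widecheck s\) for some \(s \subseteq \omega\) in \lil{pSet}, ``\(z \subseteq \omega\)'' is \(\Delta_0\), and \(\widecheck\omega = \omega\), so \(\widecheck s \subseteq^B \omega\) and hence \(\widecheck s \in^B \mathcal{P}(\omega)\) by \Cref{def:powerset}. For the reverse inclusion, I would fix \(x\), set \(\Gamma := x \in^B \mathcal{P}(\omega)\), and note \(\Gamma \le x \subseteq^B \omega\). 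The key construction is the \emph{total characteristic function} \(f\) of \(x\): the \(\B\)-valued set whose graph consists of the check-names of the pairs \((n,0)\) and \((n,1)\) for each \(n : \N\), carrying the Boolean values \({-}(\widecheck n \in^B x)\) and \(\widecheck n \in^B x\) respectively. Since for each \(n\) these two values partition \(\top\), \(f\) is unconditionally total, functional and well-defined, so \(\top\) forces that \(f\) is a function from \(\omega\) to \(\widecheck 2\). Applying the hypothesis that \lil{bSet 𝔹} reflects countable functions then yields \(g : \) \lil{pSet}, a function \(\omega \to 2\), with \(\top \le \widecheck g =^B f\). Letting \(s : \) \lil{pSet} be the subset of \(\omega\) coded by \(g\), unwinding \(\widecheck g =^B f\) forces \(\widecheck n \in^B x\) to be \(\top\) when \(n \in s\) and \(\bot\) otherwise; combined with \(\Gamma \le x \subseteq^B \omega\) (so that modulo \(\Gamma\) every element of \(x\) equals some \(\widecheck n\)) this gives \(\Gamma \le x =^B \widecheck s\). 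Since \(s \in \mathcal{P}(\omega)\) in \lil{pSet} gives \(\widecheck s \in^B \widecheck{\mathcal{P}(\omega)}\), I conclude \(\Gamma \le x \in^B \widecheck{\mathcal{P}(\omega)}\), as needed.

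For \(\widecheck{\aleph_1} = \aleph_1^{\mathbb{B}}\): I would use that \(\aleph_1^{\mathbb{B}}\) was built in \Cref{subsect:forcing:aleph-1} precisely so that, inside \lil{bSet 𝔹}, it is (forced to be) the set of all countable ordinals, whereas \(\widecheck{\aleph_1}\) is the check of the set of countable \lil{pSet}-ordinals. The inclusion \(\widecheck{\aleph_1} \subseteq^B \aleph_1^{\mathbb{B}}\) is easy: each element of \(\widecheck{\aleph_1}\) is \(\widecheck\eta\) for a countable \lil{pSet}-ordinal \(\eta\), and checking a \lil{pSet}-injection \(\eta \hookrightarrow \omega\) shows \(\widecheck\eta\) is a countable ordinal in \lil{bSet 𝔹}, hence lies in \(\aleph_1^{\mathbb{B}}\) by the surjectivity part of the Hartogs construction. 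For the reverse, fix \(x\) with \(\Gamma := x \in^B \aleph_1^{\mathbb{B}}\), so \(\Gamma\) forces \(x\) to be a countable ordinal; using the maximum principle I would instantiate this to obtain \(R : \) \lil{bSet 𝔹} which \(\Gamma\) forces to be a well-order on a subset of \(\omega\) of order-type \(x\). Transporting \(R\) across a checked bijection \(\omega \times \omega \cong \omega\) and invoking the equality \(\widecheck{\mathcal{P}(\omega)} = \mathcal{P}(\omega)\) just proved, \(R\) is forced (modulo \(\Gamma\)) to equal \(\widecheck{R_0}\) for some well-order \(R_0 : \) \lil{pSet} on a subset of \(\omega\); checking an order-isomorphism of \(R_0\) with its \lil{pSet} order-type \(\eta_0 < \aleph_1\) gives \(\Gamma \le x =^B \widecheck{\eta_0}\) with \(\widecheck{\eta_0} \in^B \widecheck{\aleph_1}\), so \(\Gamma \le x \in^B \widecheck{\aleph_1}\). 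Since the two sets then have the same elements, extensionality gives \(\widecheck{\aleph_1} =^B \aleph_1^{\mathbb{B}}\).

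The hard part will be the two ``reflection'' inclusions. For \(\mathcal{P}(\omega)\) the care is in arranging the total characteristic function so that it is \emph{unconditionally} a function (this is what lets reflection be applied at truth-value \(\top\)) and then extracting \(x =^B \widecheck s\) from \(\widecheck g =^B f\) while tracking the truth-value \(\Gamma\). For \(\aleph_1\) the care is in the absoluteness bookkeeping: replacing a countable internal ordinal by a well-order on a subset of \(\omega\) via the maximum principle, recognizing that well-order as a check-name using the \(\mathcal{P}(\omega)\) case, and verifying that \lil{check} preserves order-types. These steps rely on \lil{bSet 𝔹} satisfying Foundation (so that ``is an ordinal''/``is a well-order'' is essentially \(\Delta_0\)) and on the construction-specific description of \(\aleph_1^{\mathbb{B}}\), so this proof is somewhat entangled with \Cref{subsect:forcing:aleph-1}.
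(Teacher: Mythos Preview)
Your argument for \(\widecheck{\mathcal{P}(\omega)} = \mathcal{P}(\omega)\) and for the inclusion \(\widecheck{\aleph_1} \subseteq \aleph_1^{\mathbb{B}}\) matches the paper's. The genuine difference is in the reverse inclusion \(\aleph_1^{\mathbb{B}} \subseteq \widecheck{\aleph_1}\). You argue directly: for each \(x \in \aleph_1^{\mathbb{B}}\), extract via the Hartogs description a witnessing well-order \(R\) on \(\omega\), reflect \(R\) to \lil{pSet} by bootstrapping from the already-proved \(\mathcal{P}(\omega)\) case, and recover \(x\) as the check of the order-type of the reflected relation. The paper instead argues by contradiction: if the inclusion fails then (by linearity of the ordinals) \(\widecheck{\aleph_1} < \aleph_1^{\mathbb{B}}\), so \(\widecheck{\aleph_1}\) is countable inside \lil{bSet 𝔹} and there is a surjection \(f : \omega \to \widecheck{\aleph_1}\); applying the reflection hypothesis \emph{directly} with \(y = \aleph_1\) lifts \(f\) to a surjection \(\omega \to \aleph_1\) in \lil{pSet}, which is absurd. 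The paper's route is shorter, uses the hypothesis in one shot, and sidesteps the order-type and well-order absoluteness bookkeeping you anticipate; your route is more constructive (it names the check each element of \(\aleph_1^{\mathbb{B}}\) equals) but pays with that bookkeeping and an extra invocation of the maximum principle.

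One technical caution on the \(\mathcal{P}(\omega)\) step: the claim that reflection yields a \emph{single} \(g\) with \(\top \le \widecheck{g} =^B f\) is too strong. A mixture of two distinct check-functions is still unconditionally a function \(\omega \to \widecheck{2}\), yet equals no single \(\widecheck{g}\) at \(\top\). What the hypothesis actually delivers (cf.\ the proof of \Cref{lemma:function-reflect}) is \(\llbracket f \in \mathsf{functions}\ \omega\ \widecheck{y}\rrbracket \le \bigsqcup_g \llbracket \widecheck{g} = f \rrbracket\); your argument (and the paper's) goes through unchanged once you work below this supremum rather than with a fixed \(g\).
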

\begin{proof}

  To see that \(\widecheck{\aleph_1} \subseteq \aleph_1^{\mathbb{B}}\), let \(x\) be an arbitrary element of \(\widecheck{\aleph_1}\).
  By definition \(x\) is equal to \(\widecheck{\eta}\) for some \(\eta < \aleph_1\) in \lil{pSet}.
  Since the ordinals and cardinals of \lil{pSet} are isomorphic to Lean's ordinals and cardinals for \lil{Type u}, \(\eta\) injects into \(\omega\) (in \lil{pSet}, and also at the level of indexing types).
  Since being an injective function is \(\Delta_0\), it is absolute for \lil{check}, so \(x = \widecheck{\eta}\) injects into \(\omega\).
  Then, by definition of \(\aleph_1^{\mathbb{B}}\) we have \(x \in \aleph_1^{\mathbb{B}}\).\footnote{Note that this did not use our assumption, and holds for general \(\mathbb{B}\).
  For a conventional proof in a set-theoretic metatheory, see e.g. \cite{bell2011set}}

  To see that \(\aleph_1^{\mathbb{B}} \subseteq \widecheck{\aleph_1}\), suppose towards a contradiction that this is not true; since the ordinals are well-ordered,
  this means that \(\widecheck{\aleph_1} < \aleph_1^{\mathbb{B}}\), so by definition of \(\aleph_1^{\mathbb{B}}\), there is a surjection \(f : \omega \to \widecheck{\aleph_1}\).
  By assumption, this surjection can be lifted to a function \(g : \omega \to \aleph_1\) in \lil{pSet}, which can again be checked to be surjective, a contradiction.

  Similarly, it is true for general \(\mathbb{B}\) and any \lil{x : pSet} that \(\widecheck{\mathcal{P}(x)} \subseteq \mathcal{P}(\widecheck{x})\),
  because indicator functions into \lil{bool} naturally induce indicator functions to \(\mathbb{B}\) (by composing with the canonical inclusion \lil{bool → 𝔹}).
  Conversely, to show that \(\mathcal{P}(\omega) \subseteq \widecheck{\mathcal{P}(\omega)}\), use the isomorphism \(\mathcal{P}(\omega) \simeq \widecheck{2}^{\widecheck{\omega}}\) to reduce this to showing that \(\widecheck{2}^{\widecheck{\omega}} \subseteq \widecheck{2^{\omega}}\),
  and then apply the assumption to an arbitrary element of \(\widecheck{2}^{\widecheck{\omega}}\).
\end{proof}

It remains to show that \(\mathbb{B}_{\mathsf{collapse}}\) fulfills the assumptions of \Cref{lemma:function-reflect-suffices}.

\begin{lemma}\label{lemma:function-reflect}
  \(\text{\lil{bSet}}\ \mathbb{B}_{\mathsf{collapse}}\) reflects countable functions.
\end{lemma}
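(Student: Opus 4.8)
The plan is to exploit that \(\B_{\mathsf{collapse}}\) is \(\sigma\)-closed (\Cref{lemma:collapse-algebra-sigma-closed}) in order to run the classical argument that a \(\sigma\)-closed forcing adds no new functions from \(\omega\) into a ground-model set. Recall from that lemma the dense suborder \(\mathcal{D} = \{D_p\}_p\), where \(p\) ranges over countable partial functions \(\aleph_1 \to \mathcal{P}(\omega)\), and that every \(\omega\)-indexed descending chain in \(\mathcal{D}\) has a lower bound \(D_{\bigcup_i p_i}\) lying again in \(\mathcal{D}\). Given \lil{y : pSet} and \lil{f} in \lil{bSet} \(\B_{\mathsf{collapse}}\) such that \lil{bSet} \(\B_{\mathsf{collapse}}\) models that \lil{f} is a function from \(\widecheck{\omega}\) to \(\widecheck{y}\), I want to produce a \lil{pSet}-function \lil{g} with \(\widecheck{g} = \)\lil{f}. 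Since \(\mathcal{D}\) is dense, it suffices to show that below an arbitrary \(D_{p_0}\) there is a condition forcing that \lil{f} equals the check of a genuine \lil{pSet}-function \(\omega \to \)\lil{y}; the family of such conditions is then dense, which is the form in which \Cref{lemma:function-reflect-suffices} uses reflection.

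The heart of the argument is a fusion-style recursion. I would build a descending chain \(D_{p_0} \geq D_{p_1} \geq \cdots\) in \(\mathcal{D}\) together with elements \(y_n\) of the indexing type of \lil{y} such that \(D_{p_{n+1}} \leq \llbracket\, (\widecheck{n},\, \widecheck{y_n}) \in f\, \rrbracket\), where \(\widecheck{y_n}\) abbreviates the check of the \(y_n\)-th indexed element of \lil{y}. The inductive step rests on the maximum principle of \Cref{subsect:bset:fundamental-thm} (really only the distributivity of \(\sqcap\) over suprema in the complete Boolean algebra, since the elements of \(\widecheck{\omega}\) are exactly the \(\widecheck{n}\) and the elements of \(\widecheck{y}\) exactly the checks of \lil{y}'s indexed elements): from \(D_{p_n} \leq \llbracket\, f \text{ is a function } \widecheck{\omega} \to \widecheck{y}\, \rrbracket\) one reads off that the truth value of ``\(f\) sends \(\widecheck{n}\) to some element of \(\widecheck{y}\)'' equals a supremum \(\bigsqcup_i \llbracket\, (\widecheck{n}, \widecheck{y_i}) \in f\, \rrbracket\) lying above \(D_{p_n} > \bot\); hence some index \(y_n\) makes \(D_{p_n} \sqcap \llbracket\, (\widecheck{n}, \widecheck{y_n}) \in f\, \rrbracket\) nonzero, and density of \(\mathcal{D}\) provides \(D_{p_{n+1}}\) below this meet.

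Now apply \(\sigma\)-closedness: choose \(D_{p_\omega} \in \mathcal{D}\) a lower bound of the chain, so \(\bot < D_{p_\omega}\). Let \lil{g : pSet} be the function \(\omega \to \)\lil{y} sending \(n\) to the \(y_n\)-th indexed element of \lil{y} (concretely, the \lil{pSet} whose members are the \lil{pSet}-ordered-pairs of \(n\) with that element). Since \lil{check} commutes with ordered pairing and preserves the \(\Delta_0\) property ``is a function with domain \(\widecheck{\omega}\)'' (cf.\ \Cref{def:check}), I would then derive \(D_{p_\omega} \leq \llbracket\, \widecheck{g} = f\, \rrbracket\). The inclusion \(\widecheck{g} \subseteq f\) is immediate: every element of \(\widecheck{g}\) has the form \((\widecheck{n}, \widecheck{y_n})\), and \(D_{p_\omega} \leq D_{p_{n+1}}\) forces that pair into \lil{f}. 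Equality then follows because, below \(D_{p_\omega}\), both \lil{f} and \(\widecheck{g}\) are functions with domain \(\widecheck{\omega}\), and a function containing a function with the same domain must equal it --- a theorem of \lil{ZFC}, hence forced with value \(\top\) in \lil{bSet} \(\B_{\mathsf{collapse}}\) by the fundamental theorem of forcing, or proved by a short direct Boolean-valued computation.

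The main obstacle I expect is bookkeeping rather than anything conceptual: keeping straight which data are \(\Delta_0\)-absolute under \lil{check} (being a function, having domain \(\widecheck{\omega}\), ordered pairing), running the Boolean-valued recursion while simultaneously maintaining that the \(D_{p_n}\) stay inside \(\mathcal{D}\) and remain nested, and unfolding the bounded existential ``\(\exists v \in \widecheck{y}\)'' through the explicit indexing type of \(\widecheck{y}\). These are precisely the manipulations the natural-deduction-inside-a-complete-Boolean-algebra tactic library (\Cref{sect:metaprogramming}) is built to automate, so I would rely on it heavily for the inequality chases in the last two steps.
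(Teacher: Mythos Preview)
Your proposal is correct and follows essentially the same approach as the paper: reduce by density to working below a principal open, recursively build a descending chain while choosing values \(y_n\), invoke \(\sigma\)-closedness to secure a nonzero lower bound, and conclude that the assembled \(g\) satisfies \(\widecheck{g}=f\) there. The only cosmetic difference is that you shrink into the dense suborder \(\mathcal{D}\) at each recursive step, whereas the paper first describes the chain in \(\mathbb{B}_{\mathsf{collapse}}\) and then shrinks; your final equality argument (inclusion \(\widecheck{g}\subseteq f\) plus same-domain functionality) spells out what the paper leaves implicit.
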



\begin{proof}
  Fix \(y\) and \(f\). It suffices to show that
\begin{lstlisting}
f ∈ᴮ functions ω y̌
  ≤ ⨆ (g : bSet 𝔹),
      g ∈ᴮ (functions omega y)̌  ⊓ g =ᴮ f
\end{lstlisting}
  and by a density argument, it suffices to show that for every principal open \(D_p\), for \(D := D_p \cap f \in^{\B} \mathsf{functions}\ \omega \ \widecheck{y}\),
\begin{lstlisting}
⊥ < (⋃ g, D ⊓ g ∈ᴮ (functions omega y)̌  ⊓ g =ᴮ f)
\end{lstlisting}
  It suffices to construct a single function \(g : \omega \to y\) such that \(\bot < D \sqcap \widecheck{g} = f\).
  As with Cohen forcing, we will reflect a Boolean-valued \(\forall\)-\(\exists\) statement into the metatheory, and then use a combinatorial property of \(\mathbb{B}_{\mathsf{collapse}}\) to strengthen it.
  The following lemma is true for general \(\mathbb{B}\):
\begin{lstlisting}
lemma AE_of_check_func_check (x y : pSet)
  {f : bSet 𝔹} {Γ : 𝔹}
  (H : Γ ≤ is_func' x̌ y̌ f) (H_nonzero : ⊥ < Γ) :
  ∀ (i : x.type), ∃ (j : y.type) (Γ' : 𝔹)
  (H_nonzero' : ⊥ < Γ') (H_le : Γ' ≤ Γ),
  Γ' ≤ (is_func' x̌ y̌ f) ∧
  Γ' ≤ (pair (x.func i)̌  (y.func j)̌ ) ∈ᴮ f
\end{lstlisting}
Recursively applying this lemma, we obtain \(g_0, \dots, g_n, \dots\) such that

\[D \sqcap (0 , g_0) \in^{\mathbb{B}} f > \cdots > D \sqcap \left(\bigsqcap_{k \leq n} ((k, g_k) \in^{\mathbb{B}} f)\right) > \cdots > \bot. \]

The lower bound of this chain implies that the required lift of \(f\) is
\(g := \{(k, g_k)\}_{k \in \omega}\).
For general \(\mathbb{B}\), this lower bound might be \(\bot\), but because \(\mathbb{B}_{\mathsf{collapse}}\) is \(\sigma\)-closed,
we can shrink each term of the above chain into a dense suborder \(\mathcal{D}\) such that all downward \(\omega\)-indexed chains in \(\mathcal{D}\) have nonzero intersection, so the intersection of the chain is indeed nonzero.
\end{proof}

Implementing this argument was one of the most technical parts of our formalization. At each step of the construction of the downwards chain, we must recursively apply a \(\forall\)-\(\exists\) statement and use the axiom of choice to select two witnesses (with four side conditions), which are then used to simultaneously construct the downwards chain and the function \lstinline{g : pSet}. This was implemented as a monolithic recursive function defined using Lean's equation compiler, with the required parts separated afterwards.



\subsection{The Independence of \texorpdfstring{\(\CH\)}{CH}} \label{subsect:forcing:independence}

In \Cref{subsect:bset:fundamental-thm} we showed that \lil{bSet 𝔹} is a model of \lil{ZFC},
which means that we can interpret the deeply-embedded statement of \lil{CH_formula} into
\lil{bSet 𝔹}. It is easy to verify that the deeply-embedded interpretation of \lil{CH_formula} coincide with the shallow interpretations of \(\mathsf{CH}\).

As we have already observed, an easy consequence of Boolean-valued soundness is that a formula is unprovable if its negation has a model. Thus, we have:
\begin{lstlisting}
lemma unprovable_of_model_neg {C : Theory L}
  {f : sentence L} (S : bStructure L 𝔹)
  (H_model : ⊤ ⊩[S] C) [H_nonempty : nonempty S]
  {Γ : 𝔹} (H_nonzero : (⊥ : 𝔹) < Γ)
  (H : Γ ⊩[S] ∼f) : ¬ (C ⊢' f)
lemma V_𝔹_cohen_models_neg_CH :
  ⊤ ⊩[V 𝔹_cohen] ∼CH_formula
lemma V_𝔹_collapse_models_CH :
  ⊤ ⊩[V 𝔹_collapse] CH_formula
\end{lstlisting}
\noindent Combining these results yields
\begin{lstlisting}
theorem CH_unprv : ¬ (ZFC ⊢' CH_formula)
theorem neg_CH_unprv : ¬ (ZFC ⊢' ∼CH_formula)
\end{lstlisting}
\noindent and the independence of CH follows.
\begin{lstlisting}
def independent (T : Theory L) (f : sentence L) :=
¬ T ⊢' f ∧ ¬ T ⊢' ∼f
theorem independence_of_CH : independent ZFC CH_f :=
by finish [independent, CH_unprv, neg_CH_unprv]
\end{lstlisting}
\section{Automation and Metaprogramming}
\label{sect:metaprogramming}

A key feature of Lean is that it is its own metalanguage \cite{Ebner:2017:MFF:3136534.3110278}, allowing for seamless in-line definitions of custom tactics (and modifications of existing ones).
This was an invaluable asset, allowing us to rapidly develop a custom tactic library for simulating natural-deduction style proofs in complete Boolean algebras (\Cref{subsect:natded}) and automating equality reasoning in those proofs (\Cref{subsect:bv-cc}).

\subsection{Simulating Natural Deduction Proofs in Complete Boolean Algebras} \label{subsect:natded}
As stressed by Scott \cite{scott2008algebraic}, ``A main point ... is that the well-known algebraic characterizations of [complete Heyting algebras] and [complete Boolean algebras] exactly mimic the rules of deduction in the respective logics.''
Indeed, that is really why the Boolean-valued soundness theorem (see \Cref{boolean-soundness}) is true: one can just replay natural deduction proofs in arbitrary complete Boolean algebras, not just \lil{Prop}. We use Lean's metaprogramming to expose natural deduction-style tactics to the user for the purpose of proving inequalities in complete Boolean algebras. (One thinks of the \lil{≤} symbol in an inequality of Boolean truth-values as a turnstile in a proof state). An immediate challenge which arises is being able to reason about assumptions (to the left of the turnstile) modulo associativity and commutativity. For example, the natural-deduction version of this statement should simply be \lil{by assumption}:
\begin{lstlisting}
∀ a b c d e f g : 𝔹,
  (d ⊓ e) ⊓ (f ⊓ g ⊓ ((b ⊓ a) ⊓ c)) ≤ a
\end{lstlisting}
but with a naive approach, one must manually unwrap and permute the arguments of the nested \(\sqcap\)s. Our solution is to piggyback on the tactic monad's AC-invariant handling of hypotheses in the tactic state, by applying the \emph{Yoneda lemma} for posets:
\label{poset-yoneda}
\begin{lstlisting}
lemma poset_yoneda {β} [partial_order β] {a b : β}
  (H : ∀ Γ : β, Γ ≤ a → Γ ≤ b) : a ≤ b
\end{lstlisting}
With a little custom automation, our first example nearly becomes ``\lil{by assumption}''
\begin{lstlisting}
example {a b c d e f g : 𝔹} :
  (d ⊓ e) ⊓ (f ⊓ g ⊓ ((b ⊓ a) ⊓ c)) ≤ a :=
by { tidy_context, assumption }
/-  Goal state before `assumption`:
  [...]
  H_right_right_left_left : Γ ≤ b,
  H_right_right_left_right : Γ ≤ a
  ⊢ Γ ≤ a  -/
\end{lstlisting}
In this example, \lil{tidy_context} combines an application of \lil{poset_yoneda} with a call to the simplifier to split hypotheses of the form \lstinline{Γ ≤ a₁ ⊓ a₂ ⊓ ... aₙ} into \lstinline{Γ ≤ a₁, Γ ≤ a₂, ..., Γ ≤ aₙ}.
With more sophisticated tricks, such as coercing assumptions of the form \lil{(Γ ≤ a ⟹ b)} to functions \lil{Γ ≤ a → Γ ≤ b}, automated propagation of change-of-variables (``context-specialization'', see \cite{DBLP:conf/itp/HanD19} for more details), and automatically casing on disjunctions \lil{Γ ≤ a ⊔ b}, it is even possible to write a Boolean-valued tableaux prover \lil{bv_tauto}:
\begin{lstlisting}
example {a b c : 𝔹} :
  (a ⟹ b) ⊓ (b ⟹ c) ≤ a ⟹ c :=
  by { tidy_context, bv_tauto }
\end{lstlisting}
Compare this with a more conventional proof, where we even have the deduction theorem and modus ponens available as lemmas: 
\begin{lstlisting}
example {β : Type*} [complete_boolean_algebra β]
  {a b c : β} :
 ( a ⟹ b ) ⊓ ( b ⟹ c ) ≤ a ⟹ c :=
begin
  rw [ ← deduction, inf_comm, ← inf_assoc ],
  transitivity b ⊓ (b ⟹ c),
    { refine le_inf _ _,
      { apply inf_le_left_of_le, rw inf_comm,
        apply mp },
      { apply inf_le_right_of_le, refl }},
    { rw inf_comm, apply mp }
end
\end{lstlisting}

It would have been possible to go further and even write a custom tactic state,
as was done for temporal logic in Unit-B \cite{Hudon2015TheUM} or for Lean's SMT-mode framework,
such that the machinery for handling the ambient context \(\Gamma\) is completely hidden. 
However, we judged the benefits of this to be mostly cosmetic, and we leave more sophisticated implementations for future work.

\subsection{Boolean-valued Equality Reasoning}

\paragraph{Congruence Closure on Quotient Types} \label{subsect:bv-cc}
Another benefit of applying \hyperref[poset-yoneda]{\lstinline{poset_yoneda}} and using context variables \(\Gamma\) throughout the formalization is that this approach exposes a canonical poset of setoids on \lil{bSet 𝔹} induced by \lil{𝔹}-valued equality:
for every \(\Gamma : \mathbb{B}\) the relation \(\lambda\; x \; y, \Gamma \leq x =^{\mathbb{B}} y\) is an equivalence relation on \lil{bSet 𝔹}. 

Since Lean natively supports quotient types, then as soon as the only task remaining is to perform equality reasoning,
we can quotient by the appropriate setoid and simply call \lil{cc};
this is easy to automate with a custom tactic \lil{bv_cc}. 
We can add support for any predicate satisfying an appropriate \(\mathbb{B}\)-valued congruence lemma,
although we currently add support for individual predicates by hand:
\begin{lstlisting}
example {x₁ y₁ x₂ y₂ : bSet 𝔹} {Γ}
  (H₁ : Γ ≤ x₁ ∈ᴮ y₁) (H₂ : Γ ≤ x₁ =ᴮ x₂)
  (H₂ : Γ ≤ y₁ =ᴮ y₂) : Γ ≤ x₂ ∈ᴮ y₂ := by bv_cc
\end{lstlisting}
\paragraph{Discharging Congruence Lemmas} \label{subsect:B-ext}
Rewriting along a \(\mathbb{B}\)-valued equality is the same as rewriting in the appropriate setoid parametrized by the current context \(\Gamma\), 
so that the motive must satisfy an appropriate \emph{congruence lemma} \lil{h_congr} with respect to the equivalence relation:
\begin{lstlisting}
lemma bv_rw {x y : bSet 𝔹} {Γ : 𝔹}
  (H : Γ ≤ x =ᴮ y) {ϕ : bSet 𝔹 → 𝔹}
  {h_congr : ∀ x y, x =ᴮ y ⊓ ϕ x ≤ ϕ y}
  {H_new : Γ ≤ ϕ y} : Γ ≤ ϕ x
\end{lstlisting}
We alias the type of \lil{h_congr}, and add a database of \lil{@[simp]} lemmas expressing that congruence lemmas are preserved by first-order logical operations:
\begin{lstlisting}
def B_ext (ϕ : bSet 𝔹 → 𝔹) : Prop :=
∀ x y, x =ᴮ y ⊓ ϕ x ≤ ϕ y
@[simp] lemma B_ext_infi {ι} {ϕ : ι → (bSet 𝔹 → 𝔹)}
  {h : ∀ i, B_ext (ϕ i)} : B_ext (λ x, ⨅i, ϕ i x)
\end{lstlisting}
Furthermore, \lil{simp} is able to handle recursive applications of these lemmas on its own, allowing most congruence lemma proof obligations to be automatically discharged:
\begin{lstlisting}
example {w : bSet 𝔹} :
  (let ϕ := λ x, ⨅ z, z ∈ᴮ w ⊓ z ⊆ᴮ x ⊓ x ⊆ᴮ z
  in B_ext ϕ) := by simp
\end{lstlisting}
\section{Conclusions and Future Work}
\label{sect:conclusions}

Interestingly, we never used transfinite recursion for developing elementary set theory in \lil{pSet} and \lil{bSet 𝔹}. Indeed, the prevalence of transfinite recursion in traditional presentations of set theory is only a consequence of the use of transfinite recursion in the traditional definitions of \(V\) and \(V^{\mathbb{B}}\). By instead encoding \(V\) and \(V^{\mathbb{B}}\) as inductive types which expose \(\in\)-induction as their native induction principle, we completely eliminate transfinite induction from this part of our formalization. 

Our consistency proof of \(\CH\) is very different from the traditional proof, due to G\"odel, which shows that the constructible universe \(\mathsf{L}\) satisfies \(\mathsf{GCH}\). An obvious path to constructing \(\mathsf{L}\) is to define the definable powerset operation with an inductive predicate on \lil{pSet} whose constructors encode the nine G\"odel operations, and to then build the constructible hierarchy by transfinite recursion. It is interesting to consider whether there is a definition of \(\mathsf{L}\) in the same spirit as \lil{pSet} which completely avoids transfinite induction.

We also want to formalize the conservativity of \(\mathsf{ZFC}\) over the usual presentation in the language \(\{\in\}\), by proving more generally that extending a language with definable function symbols is conservative.
Furthermore, while formulas with de Bruijn indices enjoy pleasant theoretical properties, they are difficult to write and debug by hand. It should be possible with Lean's metaprogramming to write a custom parser from formulas with named variables.

Although our custom automation saved a considerable amount of work, much of it is only an approximation to a more principled approach by \emph{reflection}.
The natural deduction and equality reasoning tactics in \Cref{subsect:natded} and \Cref{subsect:bv-cc} make it easier to manually replay a first-order proof of a theorem of \(\ZFC\) in \lil{bSet 𝔹},
but the Boolean-valued soundness theorem automatically performs this replay for a deeply-embedded first-order proof tree.
Ideally, automation would reify a \(\mathbb{B}\)-valued goal to the corresponding first-order statement, discharge it by an ATP, encode the solution in our deeply-embedded proof system, then apply soundness.
Alternately, one could perform proof transfer via the completeness theorem, proving a first-order goal in an arbitrary ordinary model of \(\ZFC\) first
, then applying \(\mathbb{B}\)-valued soundness to the proof tree gotten by completeness.
The advantage to this approach is that a proof would only be computed once, then reused in any model, ordinary or \(\mathbb{B}\)-valued, whereas in our formalization, we occasionally had to prove the same statement separately in \lil{pSet} and \lil{bSet 𝔹}.






Besides the construction of \(\mathsf{L}\), the consistency of $\mathsf{GCH}$ can also be shown by an iterated forcing argument. Our current implementation of forcing should extend without too much difficulty to iterated forcing with Boolean-valued models. There are also many generalizations of the consistency of $\neg \mathsf{CH}$.
An interesting challenge could be Easton's theorem, which states that on regular cardinals the function $\kappa\mapsto 2^\kappa$ can be any monotone function not contradicting K\"onig's Theorem ($\kappa<\cf(2^\kappa)$)~\cite{easton1970powers}.

Our work only marks the beginning of an integration of formal methods with modern set theory. Since Cohen, increasingly sophisticated forcing arguments have been used to produce a vast hierarchy of independence and relative consistency results. The challenge to proof engineers is to develop libraries and automation that can uniformly handle them, so that the manipulation of forcing notions and forcing extensions in a proof assistant becomes as routine as manipulating objects in an algebraic hierarchy is today. One place to start would be to develop a good interface for forcing with posets, and for transferring arguments along the equivalence to Boolean-valued models. One could develop a typeclass hierarchy of combinatorial conditions on forcing notions, and similarly for the relative consistency strengths of extensions to \(\ZFC\).
As the next challenge to formalizers, we propose the classical result of Shelah \cite{shelah1974infinite} on the independence of Whitehead's problem, the proof of which combines the consistency of the \(\mathsf{ZFC} + (\mathsf{V} = \mathsf{L})\) with the consistency of Martin's axiom \cite{martin1970internal} over \(\ZFC + \neg \CH\) to resolve a conjecture in abstract algebra.



\begin{acks}                            
  We thank the members of the CMU-Pitt Lean group, particularly Simon Hudon, Jeremy Avigad, Mario Carneiro, Reid Barton, and Tom Hales for their feedback and suggestions; we are also grateful to Dana Scott and John Bell for their advice and correspondence.

  The authors gratefully acknowledge the support by the
  \grantsponsor{GS100000001}{Alfred P. Sloan Foundation}{https://doi.org/10.1038/201765d0}, Grant
  No.~\grantnum{GS100000001}{G-2018-10067}.
\end{acks}

\bibliography{flypitch-cpp}




\end{document}